\newif\ifpdfstoll
\newcommand{\bQ}{\mathbb{Q}} 
\newcommand{\bP}{\mathbb{P}} 
\newcommand{\bZ}{\mathbb{Z}}
\newcommand{\bN}{\mathbb{N}}
\newcommand{\bC}{\mathbb{C}}
\newcommand{\bF}{\mathbb{F}}
\newcommand{\Qbar}{{\overline{\mathbb{Q}}}}
\newcommand{\cO}{\mathcal{O}}
\newcommand{\cD}{\mathcal{D}}
\newcommand{\cU}{\mathcal{U}}
\newcommand{\term}[1]{\textbf{#1}}
\newcommand{\abs}[1]{\lvert#1\rvert}
\newcommand{\norm}[1]{\lVert#1\rVert}
\newcommand{\pabs}[1]{\lvert#1\rvert_v}
\newcommand{\floor}[1]{\lfloor#1\rfloor}
\DeclareMathOperator{\ord}{ord}
\DeclareMathOperator{\Spec}{Spec}
\DeclareMathOperator{\Span}{span}
\newtheorem{thm}{Theorem} 
\newtheorem{prop}[thm]{Proposition}
\newtheorem{lemma}[thm]{Lemma}
 \theoremstyle{remark}
\newtheorem*{rem}{Remark}
\newcounter{saveenumi} 
\newcommand{\tang}[1]{t_{#1}}
\newcommand{\tG}{\tang{G}}
\newcommand{\tGM}{\tang{G_M}}
\newcommand{\tGK}{\tang{G_{K_v}}}
\newcommand{\OG}{{\cO_G}}
\newcommand{\shffrm}[2]{\Omega_{#1}^1(#2)}
\begin{document}
\frenchspacing 
\title{The p-adic analytic subgroup theorem and applications}
\author{Tzanko Matev}
\address{Department of Mathematics \\
         University of Bayreuth \\
         95440 Bayreuth, Germany}
\email{Tzanko.Matev@uni-bayreuth.de}
\date{October 15, 2010}

\begin{abstract}
 We prove a $p$-adic analogue of W\"ustholz's analytic subgroup
theorem. We apply this result to show that a curve embedded in 
its Jacobian intersects the $p$-adic closure of the Mordell-Weil 
group transversely whenever the latter has rank equal to 1. This 
allows us to give some theoretical justification to Chabauty 
techniques applied to finding rational points on curves whose
Jacobian has Mordel-Weil group of rank 1.
\end{abstract}

\maketitle

\section{Introduction}
In this paper we develop a $p$-adic analogue to W\"ustholz's analytic subgroup
theorem \cite{wustholz-89.2} and give an application to finding rational
points on curves. 

Let $G$ be a commutative algebraic group defined over the algebraic numbers
and let $V$ be a proper linear subspace of the Lie algebra of $G(\bC)$,
spanned by algebraic vectors. Then the analytic subgroup theorem states that
if $V$ contains a vector whose image under the exponential map is an algebraic
point in $G(\bC)$, then there exists an algebraic subgroup of $G$, containing
the said point, whose Lie
algebra is a linear subspace of $V$.

In order to state the $p$-adic analogue we replace the exponential map with
the logarithm, since it behaves better in the non-archimedean case. Let $V$ be
a linear subspace of the $p$-adic Lie algebra of $G$ spanned by algebraic
vectors. Then the theorem states that if the logarithm of an algebraic point
is contained in $V$, then there exists an algebraic subgroup, containing that
point, whose Lie algebra is a linear subspace of $V$. The precise statement is
given in Section 2.

The proof of the theorem is very similar to the proof of the original
result. One constructs a section of a sheaf on $G$ which has zeros of high
order on a certain finite set of algebraic points. We then proceed to show
that the negation of the theorem forces this section to have many more zeros,
which, using a multiplicity estimate, gives a contradiction. The main
difference to the original proof is the replacement of certain complex
analytic estimates with their $p$-adic analogues, which are stated in
Proposition~\ref{sec:upper-bound-6}. The proof of the theorem is presented in
Sections 4 to 8.

In Section 3 we give an application of the $p$-adic analytic subgroup theorem
to finding rational points on curves. Let $\varphi:C\to J$ be an embedding of
a curve defined over $\bQ$ into its Jacobian. We show that if the Jacobian is
simple and has Mordell-Weil rank equal to 1, then the $p$-adic completion of
$C$ intersects the $p$-adic closure of $J(\bQ)$ transversally at every point
in $C(\bQ)$. In particular, every rational point in $C$ has a $p$-adic
neighbourhood which intersects the $p$-adic closure of the Mordell-Weil group
at a single point.

Bruin and Stoll have combined the Mordell-Weil Sieve with Chabauty techniques
to develop an algorithm for finding the rational points on curves whose
Mordell-Weil rank is smaller than the genus (see \cite{stoll-bruin-09},
4.4). The proof that the algorithm terminates is conditional on Stoll's Main
Conjecture \cite{stoll-07}, as well as on an additional conjecture (Conjecture
4.2 in \cite{stoll-bruin-09}). Our results imply that when the rank of the
Mordell-Weil group is equal to 1, one can slightly modify their algorithm so
that its termination depends only on the Main Conjecture.

\section{Notation and Main Result}\label{sec:notation-main-result-1}
Let $\Qbar$ denote the field of algebraic numbers and let $M$ be a complete
ultrametric extension of $\Qbar$. Let $G$ be a commutative algebraic group
over $\Qbar$ and let $G_M$ be its base extension over $M$. If we denote by $\tG$
and $\tGM$ the tangent spaces at the identity element $e$ of $G$ and $G_M$
respectively, we have the relation $\tGM=\tG\otimes_\Qbar M$ and one has a
canonical $\Qbar$-linear embedding $\iota:\tG\to \tGM$, $v\mapsto v\otimes
1$. We will use this map to identify $\tG$ with a subset of $\tGM$, and to
identify the space $V\otimes_\Qbar M$ with a subspace of $\tGM$ for every
vector space $V\subseteq \tG$. We shall call vectors that lie in the image of
$\iota$ \term{algebraic vectors}.

Following Bourbaki~\cite{bourbaki-lie-I}*{\S3.7.6} one can define a logarithm
map $\log: G(M)^*\to \tGM$, where $G(M)^*$ is the set of all points $\gamma\in
G(M)$ with the property that the identity $e$ is an accumulation point of the
set $\{\gamma^n\ |\ n\in\bN\}$. Then we have

\begin{thm}\label{sec:main-theorem}
  Assume that $\gamma\in G(M)^*$ is an algebraic point and that $V\subseteq
  \tG$ is a $\Qbar$-linear subspace such that $\log(\gamma)\in
  V\otimes_{\Qbar}M$. Then there exists an algebraic subgroup $B\subseteq G$
  defined over $\Qbar$, such that $\gamma\in B(\Qbar)$ and $t_B \subseteq V$.
\end{thm}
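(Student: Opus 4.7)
The plan is to adapt W\"ustholz's transcendence-theoretic approach by replacing the complex-analytic growth estimates with their $p$-adic counterparts. I argue by contradiction, assuming that no algebraic subgroup $B\subseteq G$ with $\gamma\in B(\Qbar)$ satisfies $t_B\subseteq V$. Replacing $G$ by the Zariski closure of $\langle\gamma\rangle$ in $G$, which is again a commutative algebraic group defined over $\Qbar$, I may further assume that $V$ is a proper subspace of $\tG$, for otherwise $B=G$ would already work.

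Fix a basis $v_1,\dots,v_d$ of $V$ consisting of algebraic vectors, a projective compactification $\overline{G}$ of $G$, and an ample line bundle $\mathcal{L}$ on it. For parameters $D,T,S$ to be optimised at the end, the first step is to apply the arithmetic Siegel lemma to produce a non-zero global section $s\in H^0(\overline{G},\mathcal{L}^{\otimes D})$, defined over a fixed number field $K$, whose coefficients have controlled heights and which vanishes to order $T$ along the formal leaf in the direction of $V$ at each of the points $e,\gamma,\gamma^2,\dots,\gamma^{S-1}\in G(\Qbar)$. Such a section exists because the number of linear constraints is of order $S\cdot T^d$ while $\dim_K H^0(\overline{G},\mathcal{L}^{\otimes D})$ grows like $D^{\dim G}$; for $D$ chosen large enough relative to $S$ and $T$ the system is underdetermined, and Siegel's lemma supplies a non-trivial solution with controlled $v$-adic size.

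Next comes the analytic step. Because $\log(\gamma)\in V\otimes_\Qbar M$, the $p$-adic one-parameter subgroup $k\mapsto\gamma^k$ is, on a suitable ball, tangent to the formal $V$-leaf through each $\gamma^i$. Proposition~\ref{sec:upper-bound-6} then forces $\pabs{s(\gamma^{i+k})}$ to be very small for $i<S$ and $k$ of controlled size. On the other hand $\gamma^{i+k}$ is algebraic and $s$ is defined over $K$, so if $s(\gamma^{i+k})$ did not vanish, the product formula would give a lower bound that, for the right choice of parameters, contradicts the $p$-adic smallness. Hence $s$ vanishes on a considerably longer arithmetic progression $\{e,\gamma,\dots,\gamma^{S'-1}\}$ with $S'\gg S$.

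The final step is a multiplicity estimate of Philippon--Masser--W\"ustholz type, which takes a section of $\mathcal{L}^{\otimes D}$ vanishing along $V$ to order $T$ at the progression $\gamma^0,\dots,\gamma^{S'-1}$ and concludes, for an appropriate numerical balance of the parameters, that the points $\gamma^i$ lie in a proper algebraic subgroup $B\subsetneq G$ whose tangent space is contained in $V$; this contradicts the reduction above. The main obstacle is proving Proposition~\ref{sec:upper-bound-6}: in the complex setting the analogous bound follows easily from the Schwarz lemma or a Cauchy-integral estimate, but in the ultrametric setting one has to control $\pabs{s(\gamma^k)}$ directly by analysing the formal-group logarithm and the precise domains of convergence of $\exp$ and $\log$ on $G(M)$. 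Siegel's lemma, the product-formula extrapolation, and the multiplicity estimate itself transfer from the classical archimedean setting with only cosmetic modifications.
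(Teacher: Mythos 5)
Your proposal has the right transcendence-theoretic skeleton---Siegel's lemma for the auxiliary polynomial, the $p$-adic Schwarz-type upper bound, Liouville's inequality for the lower bound, and a multiplicity estimate to close the loop---and you correctly single out the upper bound as the genuinely new ingredient over the archimedean case. However, your reduction step has a substantive gap, and it then invalidates the way you invoke the multiplicity estimate.

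Replacing $G$ by the Zariski closure of $\langle\gamma\rangle$ makes $\Gamma$ Zariski dense and lets you assume $V$ is a proper subspace, but it does \emph{not} make the pair $(G,V)$ \emph{semistable}, i.e.\ it does not guarantee that $\tau(G,V)\leq\tau(G',V')$ for every proper quotient $G'$ (where $\tau=\dim V/\dim G$). The clean form of the multiplicity estimate used in this setting (Theorem~\ref{sec:differentiation-2}) is stated only for semistable pairs, and in that form it concludes directly that $\gamma_1$ is torsion. For a general pair you are left with Philippon's zero estimate, and that does \emph{not} deliver what you claim: it produces an obstruction subgroup $H$ together with a numerical inequality relating $\deg H$, $\dim(V\cap t_H)$, and the number of cosets of $H$ met by the finite progression, but it asserts neither that all the $\gamma^i$ lie in $H$ nor that $t_H\subseteq V$. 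The conclusion $t_B\subseteq V$ is precisely what the theorem is trying to establish, so a multiplicity estimate that hands it to you outright would be circular. The paper fills this gap with a genuine reduction (Section 4): an induction on $\dim G$ in which, if $(G,V)$ is not semistable, one passes to a semistable quotient $\pi\colon G\to G'$, uses Theorem~\ref{sec:reduc-semist-pair} to force $\pi(\gamma)$ to be torsion, then descends to the kernel $H=\ker\pi$ and applies the inductive hypothesis to $H$, $\gamma^k$, and $t_H\cap V$. Some device of this kind is indispensable; the Zariski-closure trick alone does not suffice.
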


\begin{rem}\label{sec:notation-main-result} 
  There are a couple of cases, where the statement of the theorem is trivial. If
  $V=\tG$, then one can simply choose $B=G$. If on the other hand $\gamma$ is
  a torsion point, say $\gamma^m=e$ for some $m\in \bN$, then we can pick $B$
  to be the group of $m$-torsion points. Therefore the theorem gives
  non-trivial implications only when $V$ is a proper linear subspace, and
  $\gamma$ is a non-torsion point.
\end{rem}

\section{Finding points on curves}
Let $K$ be a number field with a non-archimedean valuation $v$, and let $K_v$
be the completion with respect to this valuation. Let $K_v^{alg}$ denote the
field of all algebraic numbers in $K_v$. We are going to apply
Theorem~\ref{sec:main-theorem} in the case when the group $G$ is a simple
abelian variety defined over $K$ and the linear space $V$ is
one-dimensional. Then the set $G(K_v)$ is compact, which implies that
$G(K_v)^*=G(K_v)$ and that the logarithm is globally defined. We have the
following:
\begin{lemma} \label{sec:find-points-curv} Let $A$ be a simple abelian variety
  defined over $K$. Let $b\in t_A$ be a non-zero tangent vector and let
  $\gamma\in A(K)$ be a $K$-rational point of infinite order. Then the vectors
  $b$ and $\log\gamma$ are linearly independent over $K_v$.
\end{lemma}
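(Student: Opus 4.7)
The approach is to argue by contradiction using Theorem~\ref{sec:main-theorem}. Suppose $b$ and $\log\gamma$ are $K_v$-linearly dependent; since $b\neq 0$ this forces $\log\gamma=\lambda b$ for some $\lambda\in K_v$. Fix an embedding of $K_v$ into a complete ultrametric extension $M$ of $\Qbar$ (one may take $M=\bCp$), and let $V=\Qbar\cdot b\subseteq t_A$ be the one-dimensional $\Qbar$-subspace spanned by $b$. Then $\log\gamma\in V\otimes_{\Qbar}M$, while $\gamma$ is algebraic because $\gamma\in A(K)\subseteq A(\Qbar)$. The compactness of $A(K_v)$, noted in the paragraph preceding the lemma, ensures that the identity is an accumulation point of $\{\gamma^{n}\}_{n\in\bN}$: any convergent subsequence of positive powers yields, on differencing exponents, a sequence of positive powers of $\gamma$ tending to $e$. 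Hence $\gamma\in A(M)^{*}$ and $\log\gamma$ is defined in the sense required by the theorem.

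Theorem~\ref{sec:main-theorem}, applied to $A$, $\gamma$, and $V$, then produces an algebraic subgroup $B\subseteq A$ defined over $\Qbar$ with $\gamma\in B(\Qbar)$ and $t_B\subseteq V$. Since $\dim V=1$, we have $\dim B\leq 1$. If $\dim B=0$, then $B$ is a finite group scheme and $\gamma$ is a torsion point, contradicting the infinite-order hypothesis. If $\dim B=1$, then the identity component $B^{0}$ is a one-dimensional abelian subvariety of $A$, which is forbidden by the simplicity of $A$ (the implicit assumption $\dim A\geq 2$ is satisfied in the intended application, where $A$ is the Jacobian of a curve of genus at least two). Either way we reach a contradiction, establishing the lemma.

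The substantive content is entirely provided by Theorem~\ref{sec:main-theorem}: once the assumption of linear dependence is re-expressed as the one-dimensional subspace hypothesis $\log\gamma\in V\otimes_{\Qbar}M$, the conclusion leaves essentially no room for a non-trivial algebraic subgroup containing $\gamma$. I expect no genuine obstacle beyond correctly setting up the ambient extension $M/\Qbar$ and verifying the auxiliary condition $\gamma\in A(M)^{*}$, both of which are routine consequences of the compactness of $A(K_v)$.
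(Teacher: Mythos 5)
Your proof has a genuine gap at the final step, and it is precisely the point that the paper flags in the remark immediately following the lemma. Theorem~\ref{sec:main-theorem} produces an algebraic subgroup $B$ defined over $\Qbar$ (equivalently, over some finite Galois extension $L/K$), not over $K$. When $\dim B=1$ you obtain a one-dimensional abelian subvariety $B^0\subseteq A_{L}$, but the hypothesis is only that $A$ is simple over $K$, i.e.\ that $A$ has no proper nonzero abelian subvarieties \emph{defined over $K$}. Simplicity over $K$ does not exclude abelian subvarieties appearing after base change to $L$; that would require $A$ to be absolutely simple. So ``forbidden by the simplicity of $A$'' does not follow, and your argument as written only proves the lemma under the stronger hypothesis of absolute simplicity --- which is exactly the ``trivial'' case the paper's remark sets aside.

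The paper closes this gap with a Galois descent argument: let $\gamma_0$ be a nonzero multiple of $\gamma$ lying in $B^0(L)$; it still has infinite order and is $K$-rational, so $\{\gamma_0^n : n\in\bZ\}$ is an infinite set of $K$-rational points whose Zariski closure in $A_L$ is the one-dimensional group $B^0$. Every $\sigma\in\mathrm{Gal}(L/K)$ fixes each $\gamma_0^n$ and is a homeomorphism for the Zariski topology, hence fixes the closure $B^0$. Therefore $B^0$ descends to $K$, and only then does simplicity of $A$ over $K$ yield the contradiction. You need to add this step (or an equivalent one) for the proof to be complete. Your handling of $\gamma\in A(M)^*$ via compactness of $A(K_v)$ and of the $\dim B=0$ case is fine, and your parenthetical caveat that $\dim A\geq 2$ must be assumed is a fair observation, but it does not substitute for the descent argument.
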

\begin{rem}
  We should note that if the abelian variety $A$ is absolutely simple then the lemma
  is a trivial corollary of Theorem~\ref{sec:main-theorem}. The difficulty
  lies in showing the result for a simple, but not absolutely simple, abelian variety.
\end{rem}
\begin{proof}
  We shall give a proof by contradiction. Let $A$ be a simple abelian variety
  which is not absolutely simple. Assume that there exists an algebraic
  vector $b$ and a $K$-rational point $\gamma$ such that $\log\gamma$ lies
  in the one-dimensional $K_v$-linear space spanned by $b$. Then
  Theorem~\ref{sec:main-theorem} implies that there exists an algebraic
  subgroup $B$ defined over some finite Galois extension $L$, such that $\gamma\in
  B(L)$ and such that $t_B$ is a subspace of the $L$-linear space spanned by
  $b$. Let $B_0$ be the connected component at identity of this group, and let
  $\gamma_0$ be a multiple of $\gamma$, which lies in $B_0$. The group $B$ is
  one-dimensional, therefore $B_0$ is an elliptic curve. We will show that $B_0$ is
  defined over $K$ and thus derive a contradiction with the assumption that
  $A$ is simple.

  The idea of the following was suggested to me by Brendan Creutz. Let
  $\sigma\in Gal(L/K)$. The set of points $\{\gamma_0^n\colon n\in\bZ\}$ is
  infinite, therefore it is Zariski dense in $B_0$. The Galois automorphism
  induces a morphism $\sigma: A_L\to A_L$. This morphism is both open and
  continuous in the Zariski topology. It fixes the points $\gamma_0^n$, for all
  $n\in\bZ$, therefore it fixes their Zariski closure. But the closure of the set
  $\{\gamma_0^n\colon n\in\bZ\}$ is the curve $B_0$, therefore the morphism
  $\sigma$ fixes $B_0$. Since this is true for every Galois automorphism in
  $Gal(L/K)$, we have that $B_0$ is defined over $K$.
\end{proof}
\pagebreak
Let $C$ be a smooth curve defined over $K$. We assume that $C$ has at least
one $K$-rational point $P$. Then we have an embedding defined over~$K$
\[
\varphi_P :C \longrightarrow J
\]
into the Jacobian $J/K$ of $C$ such that $\varphi_P(P) = e$, where $e\in J(K)$
is the identity element. Let $A/K$ be a simple abelian variety,
and let
\[
i: C\longrightarrow A
\]
be a smooth non-constant morphism defined over $K$ which factors through
$\varphi_P$. (Since $A$ is simple this implies that the implied map $J\to A$ is
surjective.)

For any field extension $L$ of $K$, let $\shffrm{}{L}$ denote the sheaf of
algebraic 1-forms with coefficients in $L$ on a variety.  The general
theory of abelian varieties allows us to identify the cotangent space at the
identity $t^*_A\otimes L$ with the space of global 1-forms
$\Gamma(A,\Omega^1(L))$. We shall use that identification without further
mention.

\begin{thm}\label{sec:an-application}
  Assume that $W:= \Span_{K_v}\log A(K)$ is a 1-dimensional $K_v$-vector
  space. Then for every point $Q\in C(K_v^{alg})$ there exists a 1-form $w\in
  W^{\bot}\subset \Gamma(A,\Omega^1(K_v))$ such that $i^{*}(w)(Q) \neq 0$.
\end{thm}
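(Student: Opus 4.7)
The plan is to reduce the statement to a $K_v$-linear independence question in $t_A\otimes K_v$ and then to invoke Lemma~\ref{sec:find-points-curv}. Since $W$ is one-dimensional, I first fix a $\gamma\in A(K)$ of infinite order such that $W=K_v\cdot\log\gamma$; such a $\gamma$ exists, for otherwise $\log A(K)=0$ and $W$ would be zero-dimensional.

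Next I will use the translation-invariance of global 1-forms on an abelian variety to canonically identify $\Gamma(A,\Omega^1(K_v))$ with $t_A^*\otimes K_v$, and left-translation by $-i(Q)$ to identify the tangent space $T_{i(Q)}A$ with $t_A\otimes K_v^{alg}$. Let $b_Q\in t_A\otimes K_v^{alg}$ denote the image under this identification of a nonzero tangent vector to $i(C)$ at $i(Q)$; since $i$ is smooth, $b_Q$ is nonzero and well-defined up to a $K_v^{alg}$-scalar. Unwinding the definitions, $i^{*}(w)(Q)\neq 0$ is equivalent to the non-vanishing of the natural pairing $\langle w,b_Q\rangle$.

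The question then becomes whether there exists $w\in W^{\perp}$ with $\langle w,b_Q\rangle\neq 0$. By elementary duality in the finite-dimensional $K_v$-vector space $t_A\otimes K_v$, such a $w$ exists if and only if $b_Q\notin W$, where $b_Q$ is viewed inside $t_A\otimes K_v$ via the inclusion $K_v^{alg}\subset K_v$. Because $Q\in C(K_v^{alg})$ is defined over some number field $L\subset K_v^{alg}\subset\Qbar$, the point $i(Q)$ and the tangent direction of $i(C)$ at it are defined over $L$, so $b_Q$ is an algebraic vector in the sense of Section~\ref{sec:notation-main-result-1}. Applying Lemma~\ref{sec:find-points-curv} to $b_Q$ and $\gamma$ yields the $K_v$-linear independence of $b_Q$ and $\log\gamma$, hence $b_Q\notin K_v\cdot\log\gamma=W$, which concludes the argument.

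The chief obstacle is ensuring that $b_Q$ qualifies as an algebraic vector so that Lemma~\ref{sec:find-points-curv} is applicable; this rests on $K_v^{alg}$ being by definition the field of algebraic numbers inside $K_v$, which forces $Q$, $i(Q)$, and the tangent direction at $i(Q)$ all to be defined over $\Qbar$. The translation-invariance identification of global 1-forms with elements of $t_A^*$ and the duality step are standard and only require careful bookkeeping.
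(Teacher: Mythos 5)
Your argument is essentially identical to the paper's proof. The paper likewise reduces to a $K_v$-linear-independence question by translating the tangent direction of $i(C)$ at $i(Q)$ back to $t_A$, picks a point $\gamma\in A(K)$ of infinite order, invokes Lemma~\ref{sec:find-points-curv} to conclude $\log\gamma\notin V\otimes_K K_v$ (where $V$ is the translated tangent line), and then produces the form $w$ by linear duality. One small difference: the paper opens with an explicit reduction \textquotedblleft without loss of generality $Q\in C(K)$, otherwise replace $K$ by a finite extension whose $v$-adic completion is still $K_v$\textquotedblright\ before invoking Lemma~\ref{sec:find-points-curv}; you instead note at the end that $b_Q$ is an algebraic vector. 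Since Lemma~\ref{sec:find-points-curv} is stated for $b\in t_A$ (over $K$), the paper's explicit base change is the cleaner bookkeeping, and you should mirror it if you want the lemma to apply verbatim rather than implicitly relying on the fact that its proof extends to $b$ defined over a finite extension of $K$.
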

In other words, the image of the curve $C$ under the map $i$ is transversal to
the $v$-adic closure of $A(K)$ at every intersection point which is
algebraic. The theorem, however, does not say anything for any possible
transcendental intersection points.
\begin{proof}
  Without loss of generality we can assume that $Q\in C(K)$ (otherwise we
  replace $K$ by a finite extension whose $v$-adic completion is still
  $K_v$). Let $t_{C,Q}$ denote the tangent space at $Q$, which is a
  1-dimensional $K$-vector space. We have a map $i_*:t_{C,Q}\to t_{A,i(Q)}$,
  where, similarly, $t_{A,i(Q)}$ is the tangent space at $i(Q)$. Composing
  this map with the differential of the translation map we get a $K$-linear
  homomorphism $\phi:t_{C,Q}\to t_A$. Note that, since $i$ is smooth, the map
  $\phi$ is an injection. Let $V:=\phi(t_{C,Q})$. Let $\gamma\in A(K)$ be a
  point of infinite order. Since $A$ is simple,
  Lemma~\ref{sec:find-points-curv} implies that $\log \gamma\not\in
  V\otimes_KK_v$, therefore the spaces $V\otimes_KK_v$ and $W$ are linearly
  independent over $K_v$. Hence there exists a global 1-form $w\in
  \Gamma(A,\Omega^1(K_v))$ which does not vanish on $V$ and such that
  $w(W)=0$. It is easily seen that any such form $w$ satisfies 
  $i^*(w)(Q)\neq 0$.
\end{proof}

\subsection{The modified Bruin-Stoll algorithm}
Let $C/\bQ$ be a smooth curve of genus at least two. We assume that $C$ has a
rational point $P$ which defines the embedding $\varphi_P:C\to J$ into the
Jacobian of $C$. We also assume that its Jacobian is simple, and
that the group $J(\bQ)$ has rank 1. We pick a prime $p$ such that the map
$\varphi_P$ can be extended to a smooth morphism between smooth and proper
$\bZ_p$-schemes \mbox{$\Phi_P:{\mathcal C}\to {\mathcal J}$}, where
$C_{\bQ_p}:=C\times_{\bQ}\Spec \bQ_p$ and $J_{\bQ_p}:=J\times_\bQ \Spec \bQ_p$
are the generic fibres of ${\mathcal C}$ and ${\mathcal J}$ respectively.

Let $Q\in C(\bQ)$. Then, according to Theorem~\ref{sec:an-application}, there
exists a global $p$-adic 1-form $w\in \Gamma(C,\Omega^1(\bQ_p))$, such that
its corresponding 1-form $\varphi_P^{*-1}w$ on $J_{\bQ_p}$ annihilates $\log
J(\bQ)$. We fix one such form. Let $t$ be a local parameter on $P$ which gives
a local parameter $\bar t$ on the reduction of $C$ as well. Then one has the
expansion $w = (a_0 + a_1t + \dots)dt$, where one can assume that $a_i\in
\bZ_p$. We define $v(w):= v(a_0)$. This definition does not depend on the
choice of the local parameter.

Let $J^1(\bQ_p)$ be the kernel of the reduction map $J(\bQ_p)\to J(\bF_p)$,
and let $J^{n+1}(\bQ_p):= p^n J^1(\bQ_p)$, where we consider $J^1(\bQ_p)$ as a
formal group. We have maps $\varphi_P^n:C(\bQ_p)\to J(\bQ_p)/J^n(\bQ_p)$ defined
in the obvious way. Then we have
\begin{prop}\label{sec:modified-bruin-stoll}
  If $p\geq 3$ and $n\geq v(w)+1$, then the preimage of $\varphi_P^n(Q)$ contains a
  single rational point.
\end{prop}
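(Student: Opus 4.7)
The plan is to combine Chabauty--Coleman integration with a Newton-polygon estimate on a single residue disc. To begin, if $Q'\in C(\bQ)$ satisfies $\varphi_P^n(Q')=\varphi_P^n(Q)$, then $\varphi_P(Q')-\varphi_P(Q)$ lies in $J^n(\bQ_p)\subseteq J^1(\bQ_p)$; smoothness of $\Phi_P$ forces $\Phi_P$ to be an immersion on the special fibre, so $Q'$ and $Q$ have the same reduction, i.e.\ $Q'$ sits in the residue disc $\cD_Q$ around $Q$. Let $s$ be a local parameter at $Q$ reducing to a local parameter at $\bar Q$, giving a bijection $s\colon\cD_Q\to p\bZ_p$; expand $w=(b_0+b_1 s+b_2 s^2+\cdots)\,ds$ with $b_i\in\bZ_p$ and $v(b_0)=v(w)$ (the expansion defining $v(w)$ being applied at the point of interest).

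Next, I would introduce the formal $p$-adic integral $\eta(s):=b_0 s+\tfrac{b_1}{2}s^2+\tfrac{b_2}{3}s^3+\cdots$ on $p\bZ_p$. Because $w=\varphi_P^*\omega$ for a translation-invariant $\omega\in\Gamma(J,\Omega^1(\bQ_p))$ annihilating $\log J(\bQ)$, invariance of $\omega$ identifies $\eta(R)$ with $\omega(\log_J(\varphi_P(R)-\varphi_P(Q)))$ on $\cD_Q$, so $\eta$ vanishes on every $\bQ$-rational point of $\cD_Q$; in particular $\eta(s(Q'))=0$.

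The key translation step is to convert the preimage condition into a constraint on $s(Q')$. Smoothness of $\Phi_P$ together with its being an immersion makes the tangent vector $M:=(d\varphi_P)_Q(\partial_s)$ primitive in the integral tangent space $t_{J,e}(\bZ_p)$. In an integral basis of $t_{J,e}$ whose first element is $M$, the coordinates of $\log_J(\varphi_P(R(s))-\varphi_P(Q))$ read $s+O(s^2)$ and $O(s^2)$, so the requirement that this vector lie in $\log_J(J^n)=p^n t_{J,e}(\bZ_p)$ is equivalent to $v(s)\geq n$. I expect this integrality identification---bridging the nonlinear preimage condition with a clean subdisc---to be the main technical point.

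Finally, I would conclude by comparing valuations term by term. For $s\in p^n\bZ_p\setminus\{0\}$ and $i\geq 1$ one has $v(b_i s^{i+1}/(i+1))\geq(i+1)v(s)-v(i+1)$, while $v(b_0 s)=v(w)+v(s)$, giving
\[
 v\!\left(\tfrac{b_i}{i+1}s^{i+1}\right)-v(b_0 s)\;\geq\;i v(s)-v(i+1)-v(w)\;\geq\;(i-1)v(w)+\bigl(i-v(i+1)\bigr).
\]
For $p\geq 3$ and $i\geq 1$ the bound $p^{v(i+1)}\leq i+1<p^i$ yields $v(i+1)<i$, so the right-hand side is strictly positive. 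Hence $v(\eta(s))=v(b_0 s)<\infty$, contradicting $\eta(s(Q'))=0$ unless $s(Q')=0$, i.e.\ $Q'=Q$.
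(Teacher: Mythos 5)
Your argument is correct and follows essentially the same route as the paper: reduce the preimage condition to a subdisc $v(s)\geq n$ around $Q$, and then show by an ultrametric Newton-polygon estimate that the Coleman antiderivative $\eta$ of $w$ has $v(\eta(s))=v(b_0 s)<\infty$ for $s\neq 0$, hence $s=0$ is its only zero there. The only cosmetic difference is your choice of local parameter: the paper pulls back a formal coordinate $r_1$ from $\mathcal J$ (so the condition $|r_1|\leq p^{-n}$ is immediate from the description of $J^n$), whereas you work with an arbitrary integral parameter $s$ on $\mathcal C$, which costs you the extra step identifying the preimage condition with $v(s)\geq n$ via the immersion and $\log_J(J^n)=p^n t_{J,e}(\bZ_p)$.
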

\begin{proof}
  The proof is a slight modification of the proof of Proposition 6.3 in
  \cite{stoll-06}. Let $r_1,\dots,r_g$ be local parameters of $J$ at
  $\varphi_P(Q)$ such that their reduction gives local parameters on the
  special fiber of ${\mathcal J}$. Then there exists an index $i$ such that
  $\varphi_P^*r_i$ and its reduction give local parameters on $C$ and ${\mathcal
    C}\times\Spec\bF_p$ respectively. Without loss of generality we can assume
  that $i=1$. Then one has the representation
  \[
  w = (a_0 + a_1r_1 +\cdots)dr_1.
  \]
  Since the residue class of $\varphi^n_P(Q)$ consists precisely of the points
  for which $\max\{\abs{r_1},\dots,\abs{r_g}\}\leq p^{-n}$, we have that
  $\abs{r_1}\leq p^{-n}$ for all points in the preimage of
  $\varphi^n_P(Q)$. Let $r:= p^{-n}r_1$.
  
  Then, the logarithm corresponding to $w$ is given by
  \[
  \lambda_w(r) = a_0p^nr + \frac{a_1p^{2n}}{2}r^2 +\dots +
  \frac{a_mp^{n(m+1)}}{m+1}r^{m+1} +\cdots
  \]
  According to the Chabauty theory (see \cite{stoll-06} for details) we know that
  all the rational points lying in the preimage of $\varphi_P^n(Q)$ correspond to
  zeros of $\lambda_w(r)$ for $\abs{r}\leq 1$. It is clear that if
  $v(\frac{a_mp^{n(m+1)}}{m+1})>v(p^na_0)$ for all $m\geq 1$, then this function will
  have a single solution $r=0$. One can easily check that for $p\geq 3$ and
  $n\geq v(a_0)+1$ this is precisely the case. This proves the
  proposition.
\end{proof}

The Mordell-Weil Sieve involves constructing a certain finite abelian group
$G$ together with a subset $X_G\subset G$ and a group homomorphism
\mbox{$\psi: J(\bQ)\to G$} such that $\varphi_P(C(\bQ)) \subseteq
\psi^{-1}(X_G)$. It is expected that one can construct $G$ in such a way so
that the last relation becomes an equality of sets (see \cite{stoll-bruin-09}
for details). In practice one picks a number $N$ which kills all elements in
$G$, and only considers the quotient map $\psi_N: J(\bQ)/NJ(\bQ)\to G$. Then
we have the following algorithm for finding the rational points on $C$:

\begin{enumerate}
  \item Fix a prime of smooth reduction $p\geq 3$, and compute $G_p:=J(\bF_p)$, as well
    as $X_p:= \varphi_P(C(\bF_p))$. Compute $G$ and $X_G$ using the Mordell-Weil
    Sieve and pick $N$ to be a multiple of the exponent of $G_p\times G$.
  \item Find all the residue classes of $J(\bQ)/NJ(\bQ)$ which map into
    $X_p\times X_G\subset G_p\times G$. If there are no such classes then terminate.
  \item For each of the residue classes we have found in Step 2, find the
    representative with smallest canonical height in $J(\bQ)$ and check if it
    comes from a rational point on the curve. Let $A$ be the set of all
    rational points which we have found in this way.
  \item For each rational point $Q\in A$, find a
    global analytic 1-form $w_Q$ which does not vanish on $Q$ but such that
    $\varphi_P^{*-1}w$ vanishes on $J(\bQ)$. Compute $v_Q:=v(w_Q)$. Let $n$ be
    the maximum of all such $v_Q$s.
  \item Set $G_p:=J(\bQ_p)/J^n(\bQ_p)$, $X_p := \varphi_P^n(C(\bQ_p))
    \setminus \varphi_P^n(A)$. Fix a new choice for $G$, $X_G$ and
    $N$ such that $N$ is a multiple of the exponent of $G_p\times G$.
  \item Go to Step 2.
\end{enumerate}

Proposition~\ref{sec:modified-bruin-stoll} guarantees that the only rational
points of $C$ mapped to $\varphi_P^n(A)$ are those coming from $A$,
which implies that if the algorithm terminates, then the union of all sets
$A$ which we find in Step 3 will be equal to $C(\bQ)$. This algorithm
is in practice equivalent to the one given by Bruin and Stoll. Theorem~\ref{sec:an-application}, however, guarantees that one can always perform Step
4, hence the termination of the algorithm depends only on Stoll's Main
Conjecture.

\section{Reduction to the semistable case}
We are going to prove Theorem~\ref{sec:main-theorem} by reducing it to a
special case. We will need the following definition. Let $G$ be a commutative
algebraic group of dimension $n$ defined over $\Qbar$, and let $V\subseteq
\tG$ be a $d$-dimensional $\Qbar$-linear subspace (we allow $d=n$). We set
$\tau(G,V):=d/n$, if \mbox{$\dim G\geq 1$}, and $\tau(G,V):=1$, otherwise.  The pair
$(G,V)$ is called \term{semistable}, if for all proper quotients $\pi: G\to
G'$ we have
\[
\tau(G,V) \leq \tau(G',V'),
\]
where $V'=\pi_*(V)$. Since $\tau(G',V')$ can take only finitely many possible
values, it follows that every pair $(G,V)$ has a semistable quotient. It is
also easy to see that if $\tau(G,V)=0$ or $\tau(G,V)=1$, then the pair $(G,V)$
is semistable.

The following statement is a special case of Theorem~\ref{sec:main-theorem}.
\begin{thm}\label{sec:reduc-semist-pair}
  Let $V$ be a proper linear subspace of $\tG$, such that $(G,V)$
  is semistable. Then $\log\gamma\notin V\otimes_{\Qbar} M$ for any
  algebraic non-torsion point  $\gamma\in G(M)^*$.
\end{thm}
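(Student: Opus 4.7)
The plan is to argue by contradiction, imitating W\"ustholz's archimedean scheme but substituting the $p$-adic analytic estimates of Proposition~\ref{sec:upper-bound-6} for the complex-analytic ones. So assume that $\log\gamma\in V\otimes_{\Qbar}M$, set $n=\dim G$ and $d=\dim V<n$, fix a $\Qbar$-rational projective compactification $\overline{G}$ of $G$ with a very ample divisor $D$, and choose large integer parameters $L_0,S,T$ to be calibrated at the end.

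Step 1 (auxiliary section). Siegel's lemma over $\Qbar$ produces a non-zero global section $F\in\Gamma(\overline{G},\mathcal{O}(L_0 D))$ of controlled logarithmic height, vanishing to order at least $T$ along $V$ at each of the points $\gamma,\gamma^2,\ldots,\gamma^S$. The number of free coefficients is of order $L_0^n$, while the number of linear conditions is of order $S T^d$, so the construction is possible as soon as $L_0^n\gg S T^d$, and Siegel's lemma delivers coefficient heights polynomial in the parameters.

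Step 2 (extrapolation). The hypothesis $\log\gamma\in V\otimes M$ forces all multiples $\gamma^s$ to lie on a common $p$-adic analytic leaf of $V$ through the identity. Expanding $F$ along this leaf as a $p$-adic power series and applying Proposition~\ref{sec:upper-bound-6} bounds the $v$-adic absolute values of the Taylor coefficients of $F$ at $\gamma^s$ for $s\leq S_1$ with $S_1\gg S$ in terms of the (vanishing) coefficients at $\gamma,\ldots,\gamma^S$. Combined with Liouville's inequality, which bounds algebraic Taylor coefficients from below, this forces $F$ to vanish to some new order $T_1>0$ at $\gamma,\ldots,\gamma^{S_1}$. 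Iterating the analytic/arithmetic dichotomy along a sequence $(S_k,T_k)$ yields vanishing of $F$ to high order along $V$ on a very large portion of the cyclic group $\langle\gamma\rangle$.

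Step 3 (zero estimate versus semistability). Apply the Philippon--Nakamaye--W\"ustholz multiplicity estimate to $F$: its large multiplicities along $V$ at many points of $\langle\gamma\rangle$ force the existence of a proper algebraic subgroup $H\subsetneq G$ defined over $\Qbar$ such that, writing $\pi:G\to G/H$ and $V':=\pi_*V$, the inequality $\tau(G/H,V')<\tau(G,V)$ holds. This contradicts the semistability of $(G,V)$. The principal obstacle is the delicate calibration of the parameters $L_0,S,T,S_1,T_1,\ldots$: the $p$-adic decay in Proposition~\ref{sec:upper-bound-6} must be strong enough that Step 2 actually produces strictly more vanishing than was built in by Step 1, while the height budget stays small enough for Liouville to be decisive; the semistability hypothesis is what allows the zero estimate of Step 3 to convert the resulting group-theoretic subgroup into the required numerical contradiction.
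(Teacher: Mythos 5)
Your proposal follows essentially the same route as the paper's own proof: construct an auxiliary polynomial $P$ by Siegel's lemma with high-order vanishing along $V$ at a finite set of powers of $\gamma$, extrapolate to vanishing at many more points by playing the $p$-adic Schwarz-type estimate (Mahler's inequality, which is what Proposition~\ref{sec:upper-bound-6} encodes after restricting to the one-parameter leaf $z\mapsto\exp(z\log\gamma)$) against Liouville's lower bound, and then invoke a multiplicity estimate that, together with semistability, forces $\gamma$ to be torsion. All three of your steps correspond to the paper's three propositions (auxiliary polynomial, upper bound, lower bound) followed by Theorem~\ref{sec:differentiation-2}.

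The two cosmetic differences are worth noting but are not gaps. First, the paper performs a \emph{single} extrapolation step rather than the iteration you describe: with $l=S^{3/2}$, $T=S^{5n+2}$, $D\approx S^{(5nd+2d+2)/n}$, it goes from $S+1$ points $\gamma^{pls}$ of order $T/2$ along $V$ directly to $lS+1$ points $\gamma^{ps}$ of order $T/2$, which already exceeds the zero-estimate threshold $S_0T_0^d>cD_0^n$ because $d\le n-1$; no sequence $(S_k,T_k)$ is needed, and this spares one the bookkeeping of keeping the iterates inside a disk of radius strictly less than $1$, which the non-archimedean setting requires (hence the passage to $\gamma_1=\gamma^p$). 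Second, the paper applies a packaged multiplicity estimate (Theorem~\ref{sec:differentiation-2}, stated for semistable pairs and concluding torsion directly), rather than the raw Philippon estimate producing a subgroup $H$ whose $\tau$-value then contradicts semistability as you propose; the two formulations are equivalent, and the cited Baker--W\"ustholz statement is precisely the reduction of the latter to the former. One small preliminary you omit, which the paper needs, is the reduction (via $\log\gamma=\tfrac1k\log\gamma^k$) to the case where $\gamma$ lies in an open subgroup $W_0$ on which $\exp$ and $\log$ are mutually inverse and the projective coordinate functions are $v$-integral; without this the normality hypothesis in Mahler's estimate is not available.
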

\begin{lemma}
  Theorem \ref{sec:reduc-semist-pair} implies Theorem \ref{sec:main-theorem}.
\end{lemma}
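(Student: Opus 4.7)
The plan is to establish Theorem~\ref{sec:main-theorem} by strong induction on $\dim G$, using Theorem~\ref{sec:reduc-semist-pair} to handle the semistable case and exploiting the failure of semistability to reduce dimension in the remaining case. By Remark~\ref{sec:notation-main-result} I may assume throughout that $V\subsetneq \tG$ and that $\gamma$ has infinite order, so in particular $\tau(G,V)<1$. If $(G,V)$ is itself semistable, Theorem~\ref{sec:reduc-semist-pair} asserts $\log\gamma\notin V\otimes_\Qbar M$, so under the hypothesis of Theorem~\ref{sec:main-theorem} this case simply does not occur, and there is nothing to prove.

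Otherwise I select a proper quotient $\pi:G\to G'$ witnessing the failure of semistability, i.e.\ with $\tau(G',V')<\tau(G,V)$ where $V':=\pi_*(V)$. Since $\tau(G,V)<1$ I also have $\tau(G',V')<1$, which by the convention $\tau=1$ in dimension zero forces $\dim G'\geq 1$ and $V'\subsetneq \tang{G'}$. Naturality of $\log$ together with the continuity of $\pi$ yields $\pi(\gamma)\in G'(M)^*$ and $\log\pi(\gamma)=\pi_*\log\gamma\in V'\otimes_\Qbar M$. The inductive hypothesis, applicable since $\dim G'<\dim G$, then produces an algebraic subgroup $B'\subseteq G'$ defined over $\Qbar$ with $\pi(\gamma)\in B'(\Qbar)$ and $t_{B'}\subseteq V'$. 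Setting $\tilde B:=\pi^{-1}(B')$ and using $\gamma\in \tilde B(\Qbar)$, one has $\log\gamma\in (V\cap t_{\tilde B})\otimes_\Qbar M$ (since $\log\gamma$ lies in both $V\otimes M$ and $t_{\tilde B}\otimes M$), so a second application of the inductive hypothesis to the triple $(\tilde B, V\cap t_{\tilde B}, \gamma)$ delivers an algebraic subgroup $B\subseteq\tilde B$ defined over $\Qbar$ with $\gamma\in B(\Qbar)$ and $t_B\subseteq V\cap t_{\tilde B}\subseteq V$, as required.

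The main obstacle is ensuring $\dim\tilde B<\dim G$, which is what legitimizes the second invocation of the inductive hypothesis. This rests entirely on the strict inequality $\tau(G',V')<1$ forced by $\tau(G,V)<1$: it ensures $V'\subsetneq\tang{G'}$, hence $t_{B'}\subsetneq\tang{G'}$, hence $\dim B'<\dim G'$, and finally $\dim\tilde B=\dim B'+\dim\ker\pi<\dim G'+\dim\ker\pi=\dim G$. Once this dimensional bookkeeping is in place, the two inductive steps --- one to descend to the quotient $G'$ and one to carve out the correct subgroup inside the preimage $\tilde B$ --- fit together without further difficulty.
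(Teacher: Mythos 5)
Your proof is correct, but it departs from the paper's argument in an interesting way. The paper takes the quotient $\pi:G\to G'$ to a \emph{semistable} pair, applies Theorem~\ref{sec:reduc-semist-pair} directly to $G'$ to force $\pi(\gamma)$ to be torsion of some order $k$, then passes to $\gamma'=k\gamma\in H:=\ker\pi$ and invokes the inductive hypothesis on $H$ (with a small case split on whether $t_H\subseteq V$), finally pulling the subgroup back through multiplication by $k$. You instead take \emph{any} proper quotient witnessing the failure of semistability, apply the inductive hypothesis (not Theorem~\ref{sec:reduc-semist-pair}) to $(G',V',\pi(\gamma))$ to get $B'$, form $\tilde B=\pi^{-1}(B')$, and apply the inductive hypothesis a second time to $(\tilde B, V\cap t_{\tilde B},\gamma)$. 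Your version is more uniform: no need to locate a semistable quotient, no case split on $t_H\subseteq V$, and no passage to a multiple of $\gamma$; the price is a second use of induction and a dimension count for $\tilde B$, which you supply correctly via $t_{B'}\subsetneq t_{G'}$. The one step you leave implicit is why $\dim G'<\dim G$: since $\pi_*$ is an isomorphism when $\ker\pi$ is finite, any quotient with $\tau(G',V')<\tau(G,V)$ necessarily has positive-dimensional kernel — worth a sentence, but not a gap. Both arguments use that $\log$ commutes with algebraic group homomorphisms and that $G'(M)^*$ contains $\pi(G(M)^*)$; your second inductive step additionally uses that $\tilde B(M)$ is closed in $G(M)$ so that $\gamma\in\tilde B(M)^*$ and $\log\gamma\in t_{\tilde B}\otimes M$, which is fine.
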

\begin{proof}
  Let $G$ be a commutative algebraic group and let $V$ be a linear subspace of
  $\tG$. We proceed by induction on $\dim G$. 

  If the pair $(G,V)$ is semistable, then Theorem~\ref{sec:reduc-semist-pair}
  together with the remark in Section \ref{sec:notation-main-result-1}
  trivially imply Theorem~\ref{sec:main-theorem}. In particular,
  Theorem~\ref{sec:main-theorem} is true whenever $\dim G\leq 1$.
  
  Assume $(G,V)$ is not semistable and that Theorem~\ref{sec:main-theorem} is
  true for all commutative algebraic groups with dimension less than $\dim G$.
  Let $\gamma\in G(M)^*$ be an algebraic point such that
  $\log\gamma\in V\otimes M$. Let $\pi: G\to G'$ be a quotient to a
  semistable pair $(G',V')$, where $V'=\pi_*(V)$. Then
  \[
  1 > \tau(G,V) > \tau(G',V'),
  \]
  hence $\dim G'>\dim V'$ and $V'$ is a proper linear subspace of
  $\tG$. Theorem~\ref{sec:reduc-semist-pair} implies that the only algebraic
  points in $G'(M)^*$, whose logarithm lies in $V'\otimes M$, are the torsion
  points. On the other hand $\pi(\gamma)\in G'(M)^*$, since $\pi$ is
  continuous. The commutativity of the diagram
  \[
  \xymatrix{G(M)^* \ar[r]^\pi \ar[d]^\log & G'(M)^* \ar[d]^\log\\
    t_G\otimes M \ar[r]^{\pi_*} & t_{G'}\otimes M
  }
  \]
  implies that $\log\pi(\gamma)\in V'\otimes M$, therefore, by the
  inductive hypothesis $\pi(\gamma)$ is a torsion point.  Let $k$ be the order
  of $\pi(\gamma)$, and let $\gamma'=k \gamma$. Then $\gamma'\in H(M)$, where
  $H:=\ker \pi$. If $\tang{H}\subseteq V$, then we pick the algebraic group
  $B$ consisiting of the points $B(\Qbar)=\{\theta\in G(\Qbar)\colon
  \theta^k\in H(\Qbar)\}$. Clearly $\gamma\in B(\Qbar)$ and $t_B=t_H\subseteq
  V$, hence Theorem~\ref{sec:main-theorem} is true. If on the other hand
  $t_H\not\subseteq V$, then,
  using the inductive hypothesis, we can apply Theorem~\ref{sec:main-theorem}
  to $H$, $\gamma'$ and the space $t_H\cap V$. Since $\log\gamma'\in
  (t_H\cap V)\otimes M$, there exists an algebraic subgroup $B_1$ of $H$, such
  that $t_{B_1}\subseteq t_H\cap V \subseteq V$ and $\gamma'\in
  B_1(\Qbar)$. We then pick $B$ such that $B(\Qbar)=\{\theta\in G(\Qbar)\colon
  \theta^k\in B_1(\Qbar)\}$. This group satisfies the properties prescribed in
  Theorem~\ref{sec:main-theorem}. This concludes the proof.
\end{proof}

\section{Preliminaries}\label{sec:preliminaries-1}
In this section we introduce some notation and standard results which are
going to be needed for the proof of Theorem~\ref{sec:reduc-semist-pair}.

We shall prove Theorem~\ref{sec:reduc-semist-pair} by contradiction.  We
assume that there exist a linear subspace $V\subsetneq t_G$ and an algebraic
non-torsion point $\gamma\in G(M)^*$ such that the pair $(G,V)$ is semistable
and $\log\gamma\in V\otimes M$.  We will denote $n=\dim G$, $d=\dim V$. Let
$\Gamma$ be the group generated by $\gamma$ in $G(M)$.

We fix an embedding $\phi: G\to \bP^N$ of $G$ into an $N$-dimensional
projective space such that $\phi(\Gamma) \cap \{X_N = 0\} = \emptyset$. (We
will use $X_0,\dots, X_N$ to denote the coordinates in $\bP^N$.) We can
always choose such an embedding. Indeed, let $\psi : G\to \bP^N$ be an
arbitrary embedding. Then the composition of $\psi$ with a projective
automorphism which sends some hyperplane defined over a sufficiently large
extension of the field of definition of $\Gamma$ to the hyperplane $\{X_N=0\}$
will give us the desired morphism $\phi$.  

We shall identify $G$ with $\phi(G)$. We denote $\bar{U}
:= \bar{G}\cap \{X_N\neq 0\}$, and $U:=\bar{U}\cap G$. Here $\bar{G}$ is the
Zariski closure of $G$ in $\bP^N$. The restriction morphism
$\cO_{\bar{G}}(\bar{U})\to \cO_{\bar{G}}(U) = \OG(U)$ is an injection. Therefore
we shall identify $\cO_{\bar{G}}(\bar{U})$ with a subset of $\OG(U)$.

The proof of the following lemma is given in \cite{wustholz-89.2}*{Section 2}
\begin{lemma}\label{sec:set-up-1}
  There exists a Zariski open set $U'\subset G\times G$ such that $\Gamma\times
  \Gamma\subset U'$ and such that the group law is represented on $U'$ by
  a single set of bi-homogeneous polynomials $E_0,\dots,E_N\in
  \Qbar[X_0,\dots, X_N,X'_0,\dots,X'_N]$ of bi-degree $b$, whose
  coefficients are algebraic integers.
\end{lemma}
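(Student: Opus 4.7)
\emph{Proof plan.} The strategy is to realise $\mu\colon G\times G\to G\hookrightarrow \bP^N$ via a pullback line bundle, produce polynomial representations of a common bi-degree, and extract a single one whose base locus avoids $\Gamma\times\Gamma$.

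I would first produce local bi-homogeneous representations. Since $\mu$ is a morphism to projective space, on each preimage $\mu^{-1}(\{X_i\neq 0\})$ the rational functions $\mu^{*}(X_j/X_i)$ are regular on $G\times G$. Clearing a common denominator and homogenising in the $X_k$ and $X'_l$ produces a bi-homogeneous tuple $\mathbf{E}^{(i)}=(E^{(i)}_0,\ldots,E^{(i)}_N)$ representing $\mu$ in that neighbourhood. Multiplying by suitable coordinate monomials normalises these finitely many tuples to a common bi-degree $(b,b)$, and their domains of validity cover $G\times G$.

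Next I would extract a single representation $\mathbf{E}$ of bi-degree $(b,b)$ valid on an open set $U'$ containing $\Gamma\times\Gamma$. Writing $\mu=[s_0:\cdots:s_N]$ for globally generating sections $s_i$ of $\mathcal{L}:=\mu^{*}\cO_{\bP^N}(1)$, every such representation has the form $(E\cdot s_0,\ldots,E\cdot s_N)$ for a global section $E$ of the twist $\cO(b,b)|_{G\times G}\otimes\mathcal{L}^{-1}$, with base locus on $G\times G$ equal to the divisor of $E$. It thus suffices to find, for some large $b$, a section $E$ whose zero divisor contains none of the algebraic points $(\gamma^m,\gamma^n)$. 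For $b\gg 0$ this twisted line bundle is positive enough to have abundant global sections, and the algebraic structure of the Zariski closure of $\Gamma$ (an algebraic subgroup of $G$ in which $\Gamma$ is Zariski dense) allows one to arrange that the divisor of $E$ misses the countable orbit $\Gamma\times\Gamma$.

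Finally, scaling the resulting $E_j$ by a common denominator yields coefficients in the ring of algebraic integers without changing the projective class $[E_0:\cdots:E_N]$ or the base locus; one then takes $U':=(G\times G)\setminus V(E_0,\ldots,E_N)$. The main difficulty is the preceding selection step: one needs a single global section of $\cO(b,b)\otimes\mathcal{L}^{-1}$ whose zero locus avoids the entire infinite orbit $\Gamma\times\Gamma$, which requires combining the positivity of the twist for $b\gg 0$ with the rigid algebraic structure of the Zariski closure of $\Gamma$, essentially as carried out in W\"ustholz's original treatment.
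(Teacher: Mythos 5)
The paper itself gives no proof of this lemma; it simply cites \cite{wustholz-89.2}*{Section 2}. So there is no in-paper argument to compare against, only the intended one. Your reduction is correct and illuminating: writing $\mu=[s_0:\cdots:s_N]$ with $s_i=\mu^*X_i\in H^0(G\times G,\mathcal{L})$, $\mathcal{L}=\mu^*\cO(1)$, every bi-degree-$(b,b)$ polynomial representation of the group law is of the form $(Es_0,\ldots,Es_N)$ for a global section $E$ of $\cN:=\cO(b,b)|_{G\times G}\otimes\mathcal{L}^{-1}$ (since the $s_i$ generate $\mathcal{L}$, $E=E_i/s_i$ is regular), and the base locus of the tuple is exactly $\{E=0\}$.

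The step you flag as the ``main difficulty'' is, however, where the argument has a genuine gap, and the mechanism you suggest for it --- positivity of $\cN$ for $b\gg 0$ plus a generic-position choice of $E$ --- does not work. Over $\Qbar$ the section space $H^0(G\times G,\cN)$ is a countable-dimensional-over-$\bQ$, but finite-dimensional-over-$\Qbar$, hence countable set, and ``$E(\gamma^m,\gamma^n)\neq 0$'' is a countable family of hyperplane conditions; a countable union of proper hyperplanes can perfectly well cover a countable vector space, so there is no ``generic'' $E$ to pick. Nor does replacing the orbit by its Zariski closure $H\times H$ help: if $\gamma$ generates a Zariski-dense subgroup of $G$ (the interesting case), then $H=G$ and no effective divisor can miss $H\times H$. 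The actual argument is of a different nature and crucially uses the normalization made just \emph{before} the lemma, namely that the embedding $\phi$ was chosen so that $\phi(\Gamma)\cap\{X_N=0\}=\emptyset$; your proposal never invokes this. In the abelian case one uses the theorem of the cube: with $\cL_0:=\cO(1)|_G$ symmetric and $\delta(x,y)=xy^{-1}$, one has $\mu^*\cL_0\otimes\delta^*\cL_0\cong p_1^*\cL_0^2\otimes p_2^*\cL_0^2$, so $\cN\cong p_1^*\cL_0^{b-2}\otimes p_2^*\cL_0^{b-2}\otimes\delta^*\cL_0$. Taking $E$ to correspond to $\delta^*X_N$ (tensored with a power of $p_i^*X_N$ if $b>2$) gives $E(\gamma^m,\gamma^n)$ proportional to $X_N(\gamma^{m-n})\cdot X_N(\gamma^m)^{b-2}\cdot X_N(\gamma^n)^{b-2}$, which is nonzero precisely because $\Gamma$ avoids $\{X_N=0\}$. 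This is a construction of a \emph{specific} $E$ adapted to $\Gamma$ through the group law, not a positivity/genericity argument; for general commutative $G$ the analogous construction is what Serre and W\"ustholz carry out. I would rewrite the final step along these lines, making explicit use of the hypothesis $\phi(\Gamma)\cap\{X_N=0\}=\emptyset$, and dropping the appeal to $b\gg 0$ (the bi-degree can be taken small and fixed). The closing remark about clearing denominators to get algebraic-integer coefficients is fine.
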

We fix such a set of polynomials and denote $\bold{E}:=(E_0:\dots: E_N)$.
\begin{rem}
  One can show that for an appropriate embedding $\phi$ the group
  operation can be given by bi-homogenous polynomials of bi-degree 2. However,
  since our results are not effective, we are not going to need this fact.
\end{rem}

From now on we shall assume that $G$, $V$, $\gamma$ and $\bold{E}$ are defined
over a fixed number field $K$ with a fixed embedding into $\Qbar$. We will
abuse notation by denoting the tangent space at $e$ of $G$ again by $t_G$. It
is now a $n$-dimensional $K$-linear space. The embedding $K\hookrightarrow
\Qbar \hookrightarrow M$ gives rise to a non-archimedean valuation $v$ on
$K$. Let $K_v$ be the completion of $K$ with respect to that valuation. Then
$\log\gamma\in \tGK = \tG\otimes_{K}K_v$.

We take the height of a point in $\bP^N(\Qbar)$ to be its projective
height. This means that if $\alpha\in\bP^N(L)$ for some number field $L$,
$\alpha = (\alpha_0,\dots,\alpha_N)$, then\pagebreak
\[
h(\alpha) := \sum_{w\in M_L}\log(\max_i\abs{\alpha_i}_w),
\]
where the sum runs over the set $M_L$ consisting of all places of $L$.  Here we define
$\abs{x}_w:= \abs{N_{L_w/\bQ_{w'}}(x)}_{w'}^{1/[L:\bQ]}$, where $w'$ is the
unique place of $\bQ$ such that $w'|w$. The height $h(P)$ of a homogenous
polynomial $P$ of degree $D$ is the height of its coefficients taken as a
point in $\bP^A(\Qbar)$, where $A=\binom{N+D}{N}-1$.  The following lemma is
proved in \cite{serre-79}*{Proposition 5}:
\begin{lemma}\label{sec:preliminaries}
  Let $\alpha_1,\dots,\alpha_k\in G(K)$. Then there exist constants $c_1$,
  $c_2$ such that 
  \[
  h(\alpha_1^{n_1}\dots\alpha_k^{n_k}) \leq c_1 + c_2(\sum_i\abs{n_i})^2
  \text{, for all } (n_i)\in \bZ^k.
  \]
\end{lemma}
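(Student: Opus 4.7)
The plan is to prove the bound in two stages: first establish a quasi-quadratic single-point estimate $h(\alpha^n) \leq c_1(\alpha) + c_2(\alpha) n^2$, and then combine the $k$ such estimates via a height inequality for the group law.

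The basic input for both stages is the bi-homogeneous description of the group law from Lemma~\ref{sec:set-up-1}. Since the $E_i$ have bi-degree $b$ with algebraic integer coefficients, a place-by-place estimate of $\abs{E_i(\alpha,\beta)}_v$ in terms of $\max_j \abs{\alpha_j}_v$ and $\max_j \abs{\beta_j}_v$ yields
\[
h(\alpha \beta) \leq b \cdot h(\alpha) + b \cdot h(\beta) + C_{\bold{E}},
\]
where $C_{\bold{E}}$ depends only on the heights of the coefficients of the $E_i$ and on the number of monomials appearing. Iterating this naively on $h(\alpha^{n+1}) \leq b h(\alpha^n) + b h(\alpha) + C_{\bold{E}}$ only gives exponential growth, so to obtain the $n^2$ bound I would refine the estimate using the analogue of the theorem of the cube: one shows that the doubling map on $G$ acts on (a twist of) the embedding $\phi$ by a factor of $4$ up to bounded correction, yielding $\bigl|h(\alpha^2) - 4 h(\alpha)\bigr| \leq C$. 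Tate's classical telescoping then exhibits a quadratic function $\hat h$ with $\bigl|h(\alpha^n) - n^2 \hat h(\alpha)\bigr| = O_\alpha(1)$, giving in particular $h(\alpha^n) \leq c_1(\alpha) + c_2(\alpha) n^2$ for all $n \in \bZ$. To upgrade the cube-theoretic input from abelian varieties to the general commutative algebraic group $G$, I would apply the Chevalley decomposition $0 \to L \to G \to A \to 0$: on $A$ the estimate is a standard consequence of the theorem of the cube, while on the linear part $L$ (which in characteristic zero is a product of copies of $\mathbb{G}_a$ and $\mathbb{G}_m$) heights grow at most linearly in the exponent, so a fibering argument over $A$ transfers the quasi-quadratic bound to $G$.

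For the multi-point case I would induct on $k$. Writing
\[
\prod_{i=1}^k \alpha_i^{n_i} = \bigl(\prod_{i=1}^{k-1} \alpha_i^{n_i}\bigr) \cdot \alpha_k^{n_k}
\]
and applying the basic group-law inequality gives
\[
h\!\left(\prod_{i=1}^k \alpha_i^{n_i}\right) \leq b \cdot h\!\left(\prod_{i=1}^{k-1} \alpha_i^{n_i}\right) + b \cdot h(\alpha_k^{n_k}) + C_{\bold{E}}.
\]
The inductive hypothesis bounds the first summand by a quasi-quadratic expression in $\sum_{i<k} \abs{n_i}$, and the single-point estimate bounds $h(\alpha_k^{n_k})$ by $c_1' + c_2' n_k^2$. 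Since $k$ is fixed, collecting the resulting constants and using $\sum_i n_i^2 \leq (\sum_i \abs{n_i})^2$ yields the claimed inequality. Negative exponents are absorbed by noting that the inverse morphism on $G \subset \bP^N$ is given by fixed polynomials and so raises the height by at most a bounded multiplicative factor plus a constant.

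The main obstacle is the quadratic functional equation $|h(\alpha^2) - 4 h(\alpha)| \leq C$ for an arbitrary commutative algebraic group. On an abelian variety it is immediate from the theorem of the cube together with a symmetric very ample line bundle, but for the general $G$ at hand one must track how the heights attached to the specific embedding $\phi$ fixed in Section~\ref{sec:preliminaries-1} behave under the Chevalley exact sequence, which is precisely the content of Serre's argument in \cite{serre-79}*{Proposition 5}.
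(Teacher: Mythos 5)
The paper gives no proof of this lemma --- it simply cites Serre's Proposition~5 in \cite{serre-79} --- so there is no ``paper's own proof'' to compare against; what you have written is a blind reconstruction of the cited argument, and the structure is sound but the hardest step is deferred back to Serre. Two concrete caveats. First, the functional equation $\abs{h(\alpha^2)-4h(\alpha)}\leq C$ is simply false on the linear part: on $\mathbb{G}_m$ one has $h(\alpha^2)=2h(\alpha)$ exactly, and on $\mathbb{G}_a$ the growth of $h(n\alpha)$ in $n$ is only logarithmic. So Tate's telescoping with the factor $4$ only makes sense on the abelian quotient $A$. You do invoke the Chevalley sequence $0\to L\to G\to A\to 0$ to separate these contributions, but the phrase ``a fibering argument over $A$ transfers the quasi-quadratic bound to $G$'' glosses over the actual difficulty: the extension need not split, the embedding $\phi$ of $G$ is not adapted to any product structure, and one must compare $h\circ\phi$ with a height on a fiberwise compactification of $G$ over $\bar A$ --- this is precisely what Serre's proof accomplishes and is the whole point of the citation. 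Second, the group-law inequality $h(\alpha\beta)\leq b\,h(\alpha)+b\,h(\beta)+C$ requires the bi-homogeneous polynomials $\bold{E}$ to represent addition at the pair $(\alpha,\beta)$; Lemma~\ref{sec:set-up-1} only guarantees this on $\Gamma\times\Gamma$, while the present lemma concerns arbitrary $\alpha_1,\dots,\alpha_k\in G(K)$. One needs a complete system of addition laws covering $G\times G$ (or equivalently, a height bound that is independent of the chosen chart), which is standard but should be said. Your induction on $k$ is fine (the constants grow like $b^k$, but $k$ is fixed), and $\sum_i n_i^2\leq(\sum_i\abs{n_i})^2$ closes the reduction correctly, so the overall skeleton matches the standard route; the gap is that the single crucial estimate is attributed back to the same reference the paper itself invokes, leaving the proposal without independent content at the decisive point.
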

\section{Differentiation}
We fix a basis $\partial_1,\dots \partial_n$ of $\tG$.  It is a standard fact
in the theory of group varieties that for every $\partial\in \tG$ there exists
a unique translation invariant derivation $D(\partial)$. This means that we
have a morphism of sheaves of $K$-linear spaces $D(\partial):\OG\to\OG$ such
that if $\cU$ is a Zariski-open subset of $G$, $f,g\in \OG(\cU)$, $\alpha\in
\cU(\Qbar)$ and $c\in K$ then
\begin{enumerate}
  \item $D(\partial) c= 0$;
  \item $D(\partial) fg = f D(\partial) g + g D(\partial) f$;
  \item $D(\partial)f(\alpha) = D(\partial)(f\circ T_\alpha) (e) = \partial (f\circ T_\alpha)$,
    \setcounter{saveenumi}{\value{enumi}}
\end{enumerate}
where $T_\alpha(\beta):= \alpha\beta$ is the translation-by-$\alpha$
morphism. Since $G$ is commutative, we also have
\begin{enumerate}
\setcounter{enumi}{\value{saveenumi}}
\item $D(\partial)D(\partial') f=D(\partial')D(\partial)f$  
\end{enumerate}
for any two vectors  $\partial,\partial'\in\tG$. From now on we shall use the
same notation for a vector in $\tG$ and for its corresponding derivation. For
any linear space $W\subseteq t_G$, $K[W]$ will denote the ring of differential
operators with coefficients in $K$ generated by derivations in $W$.

Let $\Delta_1,\dots,\Delta_d$ be a basis of $V$. We denote
\[
\cD(\infty):=\{\Delta\in K[V]\colon
\Delta=\Delta_1^{t_1}\dots\Delta_d^{t_d},\ t_i\geq 0\}.
\]
For any differential operator $\Delta\in \cD(\infty)$, $\Delta =
\Delta_1^{t_1}\dots\Delta_d^{t_d}$, we denote $\abs{\Delta}:=t_1+\dots +
t_d$. For any non-negative integer $T$ we set
\[
\cD(T):= \{\Delta \colon \abs{\Delta}\leq T\}.
\] Despite the notation, the sets $\cD(\infty)$ and
$\cD(T)$ depend on the choice of basis of $V$. We shall later fix a convenient
basis to work with.

Let $\alpha\in G(K)$ and let $P\in K[X_0,\dots,X_N]$ be a homogenous
polynomial of degree $D$. Assume that $Q$ is another such polynomial with the
same degree and such that $Q(\alpha)\neq 0$.  We define the \term{order} of $P$
along $V$ at the point $\alpha$ to be the smallest number $t$ such that there
exists $\Delta\in \cD(t)$ with
\[
\Delta\frac{P}{Q}(\alpha) \neq 0
\]
We denote the order by $\ord_{V,\alpha}P$. One can check that this definition
depends neither on $Q$, nor on the choice of basis for $V$.

Let $P$ be a homogenous polynomial of degree $D$ and let $\Delta\in
K[V]$. Then $\frac{P\circ \bold{E}(X_0,\dots,
  X_N,Y_0,\dots,Y_N)}{Y_N^{bD}}$, when considered as a function of $\bold{Y}$,
induces a rational function $f(X_0,\dots,X_N)\in \OG(U)$. We define
\[
P_\Delta(X_0,\dots,X_N) := (\Delta f(X_0,\dots,X_N))(e)
\]
Then $P_\Delta$ is a homogenous polynomial of degree $bD$ and it is not
difficult to show that $\ord_{V,\alpha}P_\Delta \geq \ord_{V,\alpha}P -
\abs{\Delta}$. This allows us to study the multiplicity of $P$ at a point
using the polynomials $P_\Delta$.

If $P$ is any polynomial we write $\abs{P}_v$ for the maximum of the absolute
values of its coefficients. We shall later need an estimate of
$\pabs{P_\Delta}$ in terms of $\pabs{P}$. We are going to show next that
after an appropriate choice of basis for $V$ such an estimate becomes trivial.
More precisely, let us call a basis $\Delta_1,\dots,\Delta_d\in V$,
\term{nice}, if the following property holds: For any homogenous polynomial
$P$ with $\pabs{P}\leq 1$ and any $\Delta\in \cD(\infty)$ one has
$\pabs{P_\Delta}\leq 1$.
\begin{lemma}\label{sec:differentiation}
  The linear space $V$ has a nice basis.
\end{lemma}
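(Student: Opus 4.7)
The plan is to prove an exponential estimate of the form $|P_\Delta|_v\leq C^{|\Delta|}|P|_v$ valid for every homogeneous polynomial $P$ and every $\Delta=\partial_1^{t_1}\cdots\partial_d^{t_d}$ arising from an arbitrary initial $K$-basis $\partial_1,\dots,\partial_d$ of $V$, and then to absorb the constant $C$ by rescaling the basis. Since $v$ is a non-trivial non-archimedean valuation on the number field $K$, one can pick $\pi\in K$ with $0<|\pi|_v\leq 1/C$, and the rescaled basis $\Delta_i:=\pi\partial_i$ satisfies $P_{\tilde\Delta}=\pi^{|\tilde\Delta|}P_\Delta$, so $|P_{\tilde\Delta}|_v\leq(|\pi|_v C)^{|\tilde\Delta|}|P|_v\leq|P|_v\leq 1$ whenever $|\tilde\Delta|\geq 1$. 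The boundary case $|\tilde\Delta|=0$ reduces to the identity $P_1(X)=\mu(X)^D P(X)/e_N^{bD}$, where $\mu$ is the common factor in $E_i(X,e)=\mu(X)X_i$, and is handled by a one-time normalization of the projective coordinates of $e$ so that $|e_N|_v=\max_i|e_i|_v$; this can be arranged by a suitable diagonal rescaling of $X_0,\dots,X_{N-1}$ without disturbing the condition $\phi(\Gamma)\cap\{X_N=0\}=\emptyset$ imposed earlier on the embedding.

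To establish the exponential bound, the natural approach is to work formally at the identity. Let $y_1,\dots,y_n$ be local parameters at $e$ in the affine chart $U$. Since each $\partial_j$ is translation invariant, on the completed local ring $\widehat{\mathcal{O}}_{G,e}=K[[y_1,\dots,y_n]]$ it can be written as a first-order differential operator
\[
\partial_j=\sum_{l=1}^n\phi_{jl}(y)\,\frac{\partial}{\partial y_l},
\]
where the power series $\phi_{jl}$ are built from the group-law polynomials $\mathbf{E}$ and the coordinates of $e$. Because $\mathbf{E}$ has algebraic integer coefficients and the coordinates of $e$ are $v$-adic units, the coefficients of the $\phi_{jl}$ are $v$-adically bounded by some constant $A$, depending on the chosen basis and embedding but not on $P$ or $\Delta$.

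The core of the argument is then an induction on $|\Delta|$. If $g\in K[[y]]$ has Taylor coefficients of $v$-adic norm at most $B$, then $(\partial/\partial y_l)g$ retains that bound (partial derivatives of monomials merely multiply coefficients by integers, which are $v$-adic units of norm $\leq 1$), while multiplication by $\phi_{jl}$ is a Cauchy product of two series with coefficients of norm $\leq A$ and $\leq B$ respectively, producing a series with coefficients of norm $\leq AB$ by the ultrametric inequality. Iterating gives that $\Delta g$ has Taylor coefficients bounded by $A^{|\Delta|}B$. Applying this to $g=f=P\circ\mathbf{E}/Y_N^{bD}$, whose Taylor coefficients in the $y_l$ are polynomials in $X$ with $v$-adic norms uniformly controlled by $|P|_v$ thanks to the integrality of $\mathbf{E}$ and the normalization of $e$, and evaluating at $y=0$, yields the required inequality $|P_\Delta|_v\leq A^{|\Delta|}|P|_v$. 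The main obstacle in the argument is precisely this local-to-polynomial bookkeeping: one must verify that the expansion of $f$ in the chosen local parameters really has coefficients uniformly bounded by $|P|_v$, which is where the normalization of the identity $e$ in the embedding plays its essential role.
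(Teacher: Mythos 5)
Your proposal takes a genuinely different route from the paper, and it contains a gap. The paper never leaves the polynomial world: it picks a finite set of $K$-algebra generators $f_1,\dots,f_k$ of $\cO_G(U)$ containing the $X_i/X_N$ and normalised so that $\pabs{f_j(e)}\leq 1$, observes that each $\Delta'_if_j$ is itself a \emph{polynomial} $F'_{ij}(f_1,\dots,f_k)$ in those generators, and then rescales $\Delta_i=c_i\Delta'_i$ by nonzero integers so that $\pabs{c_iF'_{ij}}\leq 1$ for all $j$. An induction on $\abs{\Delta}$ then gives $\pabs{\Delta F(f_1,\dots,f_k)(e)}\leq 1$ whenever $\pabs{F}\leq 1$, and this applies to $P_\Delta$ because $f=P\circ\mathbf{E}/Y_N^{bD}$ is literally a polynomial in $X_0,\dots,X_N,Y_0/Y_N,\dots,Y_{N-1}/Y_N$ with $\pabs{f}\leq 1$. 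Since only finitely many polynomials $F'_{ij}$ with finitely many coefficients are involved, the rescaling constant exists trivially, and no exponential growth ever has to be tracked or absorbed.

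Your argument instead expands everything as formal power series at $e$ and inducts on $\abs{\Delta}$ to obtain $\pabs{P_\Delta}\leq C^{\abs{\Delta}}\pabs{P}$. The gap is precisely what you flag as "the main obstacle": you assert, but do not prove, that the vector-field coefficients $\phi_{jl}$ and the Taylor expansion of $f$ in the local parameters $y_1,\dots,y_n$ have $v$-adically \emph{bounded} coefficients. These are genuine power series, produced from the rational group law by inverting a Jacobian matrix and iterating substitutions, and over a non-archimedean field such expansions can perfectly well have unbounded coefficients (for instance $1/(1-y/p)=\sum_{n\geq 0}p^{-n}y^n$). Making your version rigorous would require choosing the embedding and local parameters so that everything lives over the valuation ring and $e$ reduces to a smooth point of a smooth integral model, which is a good-reduction-type hypothesis that the paper's purely algebraic argument does not need. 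Until the boundedness of the $\phi_{jl}$ is established, the constant $C$ is not known to exist and the subsequent rescaling step is vacuous. (Your treatment of the boundary case $\abs{\Delta}=0$ by arranging $\pabs{e_N}=\max_i\pabs{e_i}$ is fine and is also implicit in the paper, via the requirement that the generators $X_i/X_N$ satisfy $\pabs{f_j(e)}\leq 1$.)
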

\begin{proof}
  Let $\Delta'_1,\dots,\Delta'_d$ be any basis of $V$. The $K$-algebra
  $\OG(U)$ is finitely generated. Pick a set of generators $f_1,\dots,f_k\in
  \OG(U)$ which contains the functions $\frac{X_i}{X_N}$ for all
  $i=0,\dots,N-1$ and such that $\abs{f_j(e)}_v\leq 1$ for all $j=1,\dots,
  k$. Then there exist polynomials $F'_{ij}\in K[T_1,\dots, T_k]$ such that
  $\Delta'_if_j=F'_{ij}(f_1,\dots,f_k)$.  Pick $\Delta_i= c_i\Delta_i'$ where
  $c_i$ is any non-zero integer such that $\abs{c_iF'_{ij}}_v\leq 1$ for all
  $j$. We are going to show that this basis has the required property.
  
  Let $F\in K[T_1\dots,T_k]$ be such, that $\abs{F}_v\leq 1$. It is then easy
  to show that $\abs{\Delta F(f_1,\dots,f_k)(e)}_v\leq 1$ for any choice of
  $\Delta = \Delta_1^{t_1}\cdots \Delta_d^{t_d}$.

  Let now $P$ be a homogenous polynomial of degree $D$. Then we have the
  representation
  \[
  \begin{split}
  f(X_0,\dots,X_N) &:= \frac{P\circ E(X_0,\dots,X_N,Y_0,\dots,Y_N)}{Y_N^{bD}}
  \\
  &= \sum_JA_JX_0^{J_0}\dots X_N^{J_N},
  \end{split}
  \]
  where $A_J\in \OG(U)$. Since the coefficients of $E$ are algebraic integers
  and $\abs{P}_v\leq 1$, it follows that $f$, when considered as a polynomial
  in $X_0,\dots, X_N, \frac{Y_0}{Y_N},\dots, \frac{Y_{N-1}}{Y_N}$, has
  $\abs{f}_v \leq 1$. Hence, since the functions $Y_i/Y_{N}$ lie in the set
  $\{f_1,\dots,f_k\}$, one has a representation $A_J =
  \sum_{I}{A_J^I}f_1^{I_1}\dots f_k^{I_k}$, where $A_J^I\in K$,
  $\abs{A_J^{I}}_v\leq 1$. By the observation in the previous paragraph it follows
  that $\abs{\Delta A_J(e)}_v \leq 1$ for all $\Delta\in \cD(\infty)$.  Since
  \[
  P_\Delta(X_0,\dots,X_N) = \sum_{J} \Delta A_J(e) X_0^{J_0}\dots X_N^{J_N},
  \]
  one concludes that $\abs{P_\Delta}_v \leq 1$ for all $\Delta\in \cD(\infty)$.
\end{proof}

From now on we fix a nice basis $\Delta_1,\dots, \Delta_d$ of $V$.

In order to complete the proof of Theorem~\ref{sec:reduc-semist-pair} we will
need to apply a multiplicity estimate. We state here one estimate, given in
\cite{baker-wustholz-07}:
\begin{thm} \label{sec:differentiation-2} Let $(G,V)$ be a semistable pair and
  let $\gamma\in G(M)$. We fix an embedding of $G$ into projective space, and
  a Zariski open subset $U$ as in Section~\ref{sec:preliminaries-1}. Let
  $\gamma_0\in \Gamma$. We fix a basis $\Delta_1,\dots,\Delta_d$ of $V$. There
  exists an effectively computable constant $c>0$ with the following
  property. Let $S_0$, $T_0$ and $D_0$ be non-negative integers such that
  \[
  S_0T_0^d>cD_0^n.
  \]
  Assume in addition that there exists a homogenous polynomial $P$
  of degree $D_0$, which does not vanish identically on $G$ and such that
  \[
  P_{\Delta_1^{t_1}\dots\Delta_d^{t_d}}(\gamma_0^s) = 0
  \]
  for all $0\leq s\leq S_0$ and all non-negative integers $t_1,\dots,t_d$ with \\
  \mbox{$0\leq t_1+\dots+t_d\leq T_0$}. Then $\gamma_0$ is a torsion point.
\end{thm}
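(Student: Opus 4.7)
The plan is to follow the standard Philippon--Nakamaye--W\"ustholz zero-estimate strategy, arguing by contradiction: assume the numerical hypothesis $S_0 T_0^d > c D_0^n$ together with the high-order vanishing of $P$ along $V$ at the points $\gamma_0^s$, yet $\gamma_0$ is not torsion. Then the Zariski closure $H$ of $\langle \gamma_0 \rangle$ is a positive-dimensional algebraic subgroup of $G$, and the $\gamma_0^s$ are Zariski dense in $H$ for $S_0$ large enough compared to $\deg H$.

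First, I would reformulate the vanishing hypothesis in intersection-theoretic terms. On a smooth projective equivariant compactification $\bar G$ of $G$, the divisor $X := \{P=0\}$ has degree $D_0$ with respect to the chosen polarisation. The conditions $P_{\Delta_1^{t_1}\cdots\Delta_d^{t_d}}(\gamma_0^s) = 0$ for $0\le s\le S_0$ and $t_1+\cdots+t_d\le T_0$ say that $X$ contains the fat point scheme $Z_{S_0,T_0}$ supported on $\{\gamma_0^s\}_{s=0}^{S_0}$ with multiplicity $T_0$ along the translate of $V$ at each point; the natural ``degree'' of $Z_{S_0,T_0}$ scales like $S_0 T_0^d$.

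Second, I would apply Philippon's lemme de z\'eros in the form used by Baker--W\"ustholz: if a hypersurface of degree $D_0$ contains such an infinitesimal configuration, then there must exist an \emph{obstruction subgroup} $G_1\subsetneq G$, defined over $\Qbar$, such that a translate of $G_1$ is contained in $X$ and such that the pair $(G_1, V\cap \tang{G_1})$ captures most of the configuration. Quantitatively, pushing forward through $\pi:G\to G' := G/G_1$, one obtains a numerical inequality of the shape
\[
S_0 T_0^{d-\dim(V\cap \tang{G_1})} \;\le\; c' D_0^{n-\dim G_1} \deg_{\bar G}(G_1),
\]
where the right-hand side reflects the Bezout contribution of the quotient. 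This is the technical heart of the argument and is the main obstacle: constructing the obstruction subgroup requires careful intersection theory on $\bar G$ together with an inductive descent on $\dim G$, and the handling of the tangent-space conditions at the ramified base points is delicate.

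Finally, the semistability of $(G,V)$ is what closes the loop. If the obstruction subgroup $G_1$ were proper, the quotient $G' = G/G_1$ would satisfy $\tau(G,V)\le \tau(G', V')$ by semistability, which controls the ratio $\dim V'/\dim G'$ from below and converts the intersection inequality above into a bound of the form $S_0T_0^d \le c D_0^n$, contradicting the hypothesis. If instead $G_1 = G$, then $X = \bar G$, contradicting the assumption that $P$ does not vanish identically on $G$. Either way we reach a contradiction, and so $\gamma_0$ must be a torsion point.
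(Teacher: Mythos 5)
The paper does not prove this theorem: it is stated as a citation to Baker and W\"ustholz \cite{baker-wustholz-07}, with the added remark that it also follows from Philippon's multiplicity estimate \cite{philippon-86} via \cite{nesterenko-philippon-01}, Chapter~11, Corollary~4.2. There is therefore no internal proof to compare your sketch against; the theorem enters the paper as an external input. Your outline does correctly describe the strategy of the cited sources: recast the derivative-vanishing conditions as containment of a weighted zero-dimensional scheme in the hypersurface $\{P=0\}$, invoke a zero estimate (Philippon's lemme de z\'eros) to produce an obstruction subgroup $G_1\subsetneq G$, push forward to $G/G_1$ to obtain a numerical inequality, and use the semistability of $(G,V)$ to contradict $S_0T_0^d>cD_0^n$ unless $G_1=G$, which is ruled out because $P$ does not vanish identically on $G$. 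As a description of what the citation encapsulates, that is accurate.

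It is, however, an outline and not a proof. The ``technical heart'' that you yourself flag --- producing the obstruction subgroup by intersection theory on a compactification while tracking multiplicities along translated copies of $V$, and carrying out the descent --- is precisely the content of the cited references and occupies the better part of a chapter there; nothing you wrote replaces it. Two smaller inaccuracies are worth noting. First, the remark that ``the $\gamma_0^s$ are Zariski dense in $H$ for $S_0$ large enough'' is false as stated: a finite set of points is never Zariski dense in a positive-dimensional variety. Since that observation plays no role in the remainder of your sketch it is harmless, but it should be excised. Second, the inequality the zero estimate actually delivers has the number of distinct $G_1$-cosets among $\{\gamma_0^s : 0\le s\le S_0\}$ and the degree of $G_1$ where you wrote $S_0$; translating that coset count back into $S_0$ is one of the points where the non-torsion hypothesis on $\gamma_0$ is genuinely used, and your sketch does not make that step explicit.
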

One could also use the more general Philippon multiplicity estimate
\cite{philippon-86} of which the previous theorem is an easy consequence (See
Chapter 11, Corollary 4.2 in \cite{nesterenko-philippon-01}).

\section{Proof of Theorem~\ref{sec:reduc-semist-pair}}
There exists a system of $v$-adic open neighbourhoods of $e$ in $G(K_v)$
consisting of subgroups of $G(K_v)$. Hence there exists an open (and closed)
subgroup $W_0\subset U(K_v)\cap \{\alpha\in \bP^N(K_v)\colon
\abs{\frac{X_i}{X_N}(\alpha)}_v\leq 1 \text{ for all } i=0,\dots,N-1\}$ where
the logarithm is a local isomorphism. Since $\gamma\in G(K_v)^*$, there exists
$k\in \bN$ such that $\gamma^k\in W_0$. Since $\log\gamma =
\frac{1}{k}\log(\gamma^k)$, in order to prove
Theorem~\ref{sec:reduc-semist-pair} one only needs to show that
$\log(\gamma^k)\not\in V\otimes K_v$. Therefore, without loss of generality,
we can assume that $\gamma\in W_0$.

Let $D$, $T$, $S$, $l$ be nonnegative integers. Since our result is not
effective we do not need to keep track of all the constants appearing in our
estimates. Therefore, in order to simplify notation,
we will use the letter $c$ to denote a sufficiently large positive number,
which can change from line to line, and which does not depend on $D$, $T$, $S$
and $l$.  Let $\gamma_1 := \gamma^p$, where $p$ is the
prime which is extended by $v$, and let $\gamma' = \gamma_1^l$. We denote the
group generated by $\gamma'$ by $\Gamma'$, and we also set $\Gamma'(S) :=
\{\gamma'^s\colon 0\leq s\leq S\}$.

The main steps of the proof are contained in the following three
propositions. We defer the proofs of those propositions to the next section.
\begin{prop}[Auxiliary polynomial]\label{sec:auxiliary-polynomial-1}
  Assume that the following inequality holds
  \begin{equation}\label{eq:2}
    D^n \geq 2n(\deg K)(T+n)^{d}(S+1)
  \end{equation}
  Then there exists a homogeneous polynomial $P$ with integer coefficients and
  degree $D$, which does not vanish on $G$, such that for all $\Delta\in
  \mathcal{D}(T/2)$ and all $\alpha\in \Gamma'(S)$ we have
  \begin{enumerate}
    \item[(a).] $\ord_{V,\alpha}P_\Delta \geq T/2$,
    \item[(b).] $h(P_\Delta) \leq c(D+T)\log(D+T) + cD(lS)^2$.
  \end{enumerate}
\end{prop}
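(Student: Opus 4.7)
The plan is to construct $P$ by Siegel's lemma, taking its coefficients as unknowns and imposing the vanishing conditions $P_\Delta(\alpha)=0$ for all $\Delta\in\cD(T-1)$ and all $\alpha\in\Gamma'(S)$. These conditions force $\ord_{V,\alpha}P\geq T$, and then the inequality $\ord_{V,\alpha}P_\Delta\geq\ord_{V,\alpha}P-\abs{\Delta}$ noted immediately after the definition of $P_\Delta$ gives $\ord_{V,\alpha}P_\Delta\geq T/2$ whenever $\Delta\in\cD(T/2)$, which is exactly conclusion (a).

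For counting unknowns I would work in $R_D$, the degree-$D$ piece of the homogeneous coordinate ring of $\bar{G}$. The Hilbert polynomial of $\bar{G}\subset\bP^N$ is of degree $n$ with positive leading coefficient, so $\dim_K R_D\geq c_0 D^n$ for $D$ large. Fixing a $\bZ$-basis of an integral complement to the homogeneous ideal of $\bar{G}$ inside $\cO_K[X_0,\dots,X_N]_D$ provides the Siegel search space; any nonzero element there automatically defines a polynomial not vanishing on $\bar{G}$. Over $\bQ$ the number of unknowns is at least $c_0(\deg K)D^n$, while the number of linear conditions is at most $(\deg K)\abs{\cD(T-1)}(S+1)\leq(\deg K)(T+n)^d(S+1)$, so hypothesis (\ref{eq:2}) is exactly what is needed for the unknowns to exceed the conditions by a factor of at least two, the input required by the Bombieri--Vaaler form of Siegel's lemma.

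For conclusion (b) I would estimate the heights of the coefficients of these linear equations place by place. The coefficient of the monomial $X^J$ in the equation indexed by $(\Delta,\gamma'^s)$ is $(X^J)_\Delta(\gamma'^s)$. At the distinguished place $v$, the niceness statement of Lemma~\ref{sec:differentiation} gives $\pabs{(X^J)_\Delta}\leq 1$, and since $\gamma'^s\in W_0$ has all affine coordinates of $v$-absolute value at most one, the $v$-adic contribution vanishes. At every other place, the combinatorics of differentiating degree-$bD$ polynomials using operators in $\cD(T)$ produces a factor whose logarithm is $O((D+T)\log(D+T))$, while evaluation at $\gamma'^s=\gamma^{pls}$ contributes $bD$ copies of the projective height $h(\gamma'^s)$, which Lemma~\ref{sec:preliminaries} bounds by $c+c(lS)^2$.

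Plugging these height estimates into Siegel's lemma yields a nonzero $P$ outside the ideal of $\bar{G}$ with $h(P)\leq c(D+T)\log(D+T)+cD(lS)^2$, and the passage from $P$ to $P_\Delta$ for $\Delta\in\cD(T/2)$ adds at most another combinatorial term of the same shape. The main obstacle is the place-by-place bookkeeping at archimedean and bad finite places: at $v$ everything is handled cleanly by the nice basis, but elsewhere one must control $(X^J)_\Delta$ through the explicit bi-homogeneous polynomials $\mathbf{E}$ of Lemma~\ref{sec:set-up-1} together with the factorial growth from iterated differentiation by $\Delta_1,\dots,\Delta_d$.
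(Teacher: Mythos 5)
Your proposal follows the same route as the paper's sketch: impose the linear conditions $P_\Delta(\gamma'^s)=0$ for small $\Delta$ and $s$, count unknowns against conditions inside a complement of the ideal of $\bar G$, apply Siegel's lemma, and control the heights of the coefficients via the nice basis at $v$, the bi-degree-$b$ addition polynomials $\mathbf{E}$ elsewhere, and Lemma~\ref{sec:preliminaries} for $h(\gamma'^s)\leq c(lS)^2$. One small bookkeeping slip: the proposition asks for $P$ with \emph{rational} integer coefficients, so the search space should be (a complement of the ideal in) $\bZ[X_0,\dots,X_N]_D$, of rank about $D^n$, while the $K$-linear conditions become $(\deg K)\cdot\#\cD\cdot(S+1)$ rational conditions after restriction of scalars; this is where the factor $\deg K$ in the hypothesis~\eqref{eq:2} lives, whereas your $\cO_K$-coefficient set-up makes it cancel and would only deliver $P\in\cO_K[X]$. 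This does not change the method, but as written it does not quite match the stated conclusion.
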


To any polynomial $P$ satisfying the conditions given in the proposition and
any $\Delta\in \cD(\infty)$ we associate a function $f_\Delta =
\frac{P_\Delta}{c_\Delta X_N^{bD}}$, where $c_\Delta\in K$ is any coefficient
of $P_\Delta$ such that $\pabs{P_\Delta}=\pabs{c_\Delta}$.
\begin{lemma}\label{sec:auxiliary-polynomial}
  The functions $f_\Delta$ have the following properties:
  \begin{enumerate}
  \item[(a).] $\pabs{f_\Delta(\alpha)}\leq 1$ whenever $\pabs{X_i(\alpha)/X_N(\alpha)}\leq 1$
    for all $i$;
  \item[(b).] If $X_N(\alpha)\neq 0$, then $h(f_\Delta(\alpha))\leq
    \log\binom{N+bD}{N} + h(P_\Delta) + bDh(\alpha)$.
  \end{enumerate}
\end{lemma}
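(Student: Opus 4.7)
The plan is to derive both parts by writing $f_\Delta$ explicitly and exploiting the fact that $c_\Delta$ was chosen precisely so that $P_\Delta/c_\Delta$ has $v$-adic supremum norm equal to one. Expand $P_\Delta = \sum_{|J| = bD} a_J X_0^{J_0}\cdots X_N^{J_N}$; since $\sum_i J_i = bD$ one may pull the factor $X_N^{bD}$ through each monomial and obtain
\[
 f_\Delta \;=\; \sum_{J} \frac{a_J}{c_\Delta}\Bigl(\frac{X_0}{X_N}\Bigr)^{J_0}\cdots\Bigl(\frac{X_{N-1}}{X_N}\Bigr)^{J_{N-1}}.
\]

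For (a), the very definition of $c_\Delta$ forces $\pabs{a_J/c_\Delta}\leq 1$ for every multi-index $J$. Combined with the hypothesis $\pabs{X_i(\alpha)/X_N(\alpha)}\leq 1$ and the ultrametric inequality at $v$, this immediately yields $\pabs{f_\Delta(\alpha)}\leq 1$.

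For (b), the cleanest route is to view $f_\Delta(\alpha)$ as the projective point $\bigl(P_\Delta(\alpha):c_\Delta \alpha_N^{bD}\bigr)\in\bP^1$, so that its height equals
\[
 h(f_\Delta(\alpha))\;=\;\sum_{w\in M_L}\log\max\bigl(\abs{P_\Delta(\alpha)}_w,\;\abs{c_\Delta \alpha_N^{bD}}_w\bigr).
\]
At each place $w$ one bounds both coordinates uniformly by $N_w\,\abs{P_\Delta}_w\,(\max_i\abs{\alpha_i}_w)^{bD}$, where $N_w=\binom{N+bD}{N}$ (the number of monomials) at archimedean $w$ by the triangle inequality, and $N_w=1$ at non-archimedean $w$ by the ultrametric inequality; for the second coordinate one also uses $\abs{c_\Delta}_w\leq \abs{P_\Delta}_w$. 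Summing over $w$, the archimedean prefactors collapse under the paper's normalisation to the single contribution $\log\binom{N+bD}{N}$, while the remaining two sums yield $h(P_\Delta)$ and $bD\,h(\alpha)$ respectively.

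I expect no real obstacle here: once the normalising role of $c_\Delta$ is recognised, (a) is an ultrametric triviality, and (b) is the standard local-to-global estimate for the evaluation of a homogeneous polynomial at a projective point.
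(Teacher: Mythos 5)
Your proof is correct, and it fills in precisely the routine estimates the paper declines to write out (its entire ``proof'' is the sentence ``Part (a) is trivial. Part (b) is easily shown using elementary height estimates.''). The monomial expansion plus the defining property of $c_\Delta$ gives (a) by the ultrametric inequality, and representing $f_\Delta(\alpha)$ as the point $\bigl(P_\Delta(\alpha):c_\Delta\alpha_N^{bD}\bigr)\in\bP^1$ and summing the local bounds is exactly the standard route to (b), consistent with the paper's normalisation of $\abs{\cdot}_w$.
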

\begin{proof}
  Part (a) is trivial. Part (b) is easily shown using elementary height
  estimates.
\end{proof}

\begin{prop}[Upper bound]\label{sec:upper-bound-6}
  Let $\Delta\in\mathcal{D}(T/2)$, let $0\leq s \leq lS$, and let $P$ be
  the polynomial in Proposition~\ref{sec:auxiliary-polynomial-1}. Then
  \begin{equation*}
    \log \pabs{f_\Delta(\gamma_1^s)} \leq -cST
  \end{equation*}
\end{prop}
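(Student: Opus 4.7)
The plan is a $p$-adic Schwarz-lemma estimate applied to the one-variable rigid analytic function
\[
F(z)\;:=\;f_\Delta\bigl(\exp(z\log\gamma_1)\bigr).
\]
First I would show that $F$ is defined and rigid analytic on a closed disk $|z|_v\le R$ with some $R>1$, and that $|F(z)|_v\le 1$ throughout that disk. This uses that $\gamma\in W_0$ keeps $|\log\gamma|_v$ strictly inside the convergence radius of $\exp$, while the factor of $p$ in $\gamma_1=\gamma^p$ shrinks $\log\gamma_1$ by the additional factor $|p|_v$, so that $\exp(z\log\gamma_1)$ still lies in $W_0$ for $|z|_v\le R$ even after allowing $R>1$. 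The bound $|F(z)|_v\le 1$ then follows from Lemma~\ref{sec:auxiliary-polynomial}(a), because $W_0\subset\{|X_i/X_N|_v\le 1\}$.

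Next I would produce many zeros of $F$. By the contradiction hypothesis $\log\gamma\in V\otimes_{\Qbar}K_v$, so $\log\gamma_1=p\log\gamma$ is a $K_v$-linear combination of the nice basis $\Delta_1,\dots,\Delta_d$. Consequently the $j$-th derivative
\[
F^{(j)}(kl)\;=\;\bigl((\log\gamma_1)\!\cdot\!D\bigr)^{j}f_\Delta(\gamma'^{k})
\]
is a $K_v$-combination of values $\Delta_1^{t_1}\cdots\Delta_d^{t_d}f_\Delta(\gamma'^{k})$ with $t_1+\cdots+t_d=j$. Proposition~\ref{sec:auxiliary-polynomial-1}(a) forces every such value to vanish for $j<T/2$, so $F$ has a zero of order at least $\lfloor T/2\rfloor$ at each of the $S+1$ integer points $0,l,2l,\dots,lS$, all of which lie in $|z|_v<R$.

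With $N:=(S+1)\lfloor T/2\rfloor$ zeros in hand, the third step is the Schwarz bound: peeling off one linear factor $(z-kl)$ at a time divides the sup norm by $R$, since ultrametrically $|z-kl|_v=R$ on the circle $|z|_v=R$ whenever $|kl|_v<R$. Hence the quotient $G(z):=F(z)\big/\prod_{k=0}^{S}(z-kl)^{\lfloor T/2\rfloor}$ is rigid analytic on $|z|_v\le R$ with $\sup_{|z|_v\le R}|G(z)|_v\le R^{-N}$. Evaluating at any integer $s$ with $0\le s\le lS$ and using the trivial bound $|s-kl|_v\le 1$ for each $k$ gives $\pabs{f_\Delta(\gamma_1^{s})}=|F(s)|_v\le R^{-N}$, so $\log\pabs{f_\Delta(\gamma_1^{s})}\le-\tfrac{1}{2}(\log R)(S+1)T\le-cST$ for an appropriate positive $c$.

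The main obstacle will be the first step: arranging simultaneously that $F$ is analytic on some disk of radius strictly greater than $1$ and that its sup norm there is $\le 1$. Both conditions are sensitive to the precise convergence radius of the $p$-adic exponential and to the $|p|_v$-worth of extra smallness supplied by $\gamma_1=\gamma^p$; once $R>1$ is secured, the Schwarz lemma is routine and the factor $S$ in the conclusion emerges automatically from the $S+1$ distinct vanishing points.
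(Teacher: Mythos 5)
Your argument is correct and is essentially the paper's proof in a lightly rescaled form: the paper sets $\phi(z)=f_\Delta(\exp(z\log\gamma))$, observes $\phi$ is normal on $\lvert z\rvert_v\le 1$ with zeros of order $\ge T/2$ at $0,pl,\dots,plS$ (all inside $\lvert z\rvert_v\le\lvert p\rvert_v=r<1$), and then quotes Mahler's $p$-adic Schwarz lemma (Theorem~\ref{sec:upper-bound-5}) to get $\lvert\phi(ps)\rvert_v\le r^{ST/2}$. Your $F(z)=f_\Delta(\exp(z\log\gamma_1))$ is just $\phi(pz)$, so taking $R=1/\lvert p\rvert_v$ is automatic rather than an obstacle, and your factor-peeling maximum-modulus argument reproves exactly the $k=0$ case of the cited Mahler theorem, arriving at the same bound.
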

\begin{prop}[Lower bound]\label{sec:lower-bound-2}
  Let $\Delta\in \mathcal{D}(T/2)$, let $0\leq s \leq lS$ and let $P$ be the
  polynomial in Proposition~\ref{sec:auxiliary-polynomial-1}. Then
  either $P_\Delta(\gamma_1^s) = 0$, or
  \begin{equation*}
    \log\pabs{f_\Delta(\gamma_1^s)} \geq - c(lS)^2D - c(D+T)\log(D+T).
  \end{equation*}
\end{prop}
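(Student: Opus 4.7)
The plan is to apply a Liouville-style inequality: if $P_\Delta(\gamma_1^s) \neq 0$ (otherwise the conclusion is vacuous), then $f_\Delta(\gamma_1^s)$ is a non-zero algebraic number, and its $v$-adic absolute value cannot be too small relative to its absolute Weil height. Concretely, if $f_\Delta(\gamma_1^s) \in L$ for some number field $L$, then the product formula $\sum_{w \in M_L} \log|f_\Delta(\gamma_1^s)|_w = 0$, combined with the elementary inequality $\log|x|_w \leq \log^+|x|_w$ at every place, gives
\[
\log \pabs{f_\Delta(\gamma_1^s)} \;=\; -\sum_{w\neq v} \log|f_\Delta(\gamma_1^s)|_w \;\geq\; -h\bigl(f_\Delta(\gamma_1^s)\bigr),
\]
with the height normalisation fixed in Section~\ref{sec:preliminaries-1}. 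Hence it suffices to bound $h(f_\Delta(\gamma_1^s))$ from above by $c(lS)^2 D + c(D+T)\log(D+T)$.

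For the height bound I would invoke Lemma~\ref{sec:auxiliary-polynomial}(b), which yields
\[
h\bigl(f_\Delta(\gamma_1^s)\bigr) \;\leq\; \log\binom{N+bD}{N} + h(P_\Delta) + bD\cdot h(\gamma_1^s).
\]
The binomial term is $O(D)$ and is harmlessly absorbed into $c(D+T)\log(D+T)$. Proposition~\ref{sec:auxiliary-polynomial-1}(b) directly controls $h(P_\Delta)$ by $c(D+T)\log(D+T) + cD(lS)^2$. Finally, since $\gamma_1^s = \gamma^{ps}$ with $ps\leq plS$, Lemma~\ref{sec:preliminaries} applied to the single generator $\gamma$ gives $h(\gamma_1^s) \leq c_1 + c_2(plS)^2$, and absorbing the fixed prime $p$ into the constant yields $h(\gamma_1^s) \leq c(lS)^2$, so that $bD\cdot h(\gamma_1^s) \leq cD(lS)^2$. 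Summing the three contributions produces exactly the claimed inequality.

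I expect this step to be essentially routine once the auxiliary polynomial has been constructed: the only care required is the matching of normalisations so that the product formula delivers $\log|x|_v \geq -h(x)$ for the specific height used, which is standard. The real content of the contradiction lies elsewhere — in Propositions~\ref{sec:auxiliary-polynomial-1} and~\ref{sec:upper-bound-6} — so that the role of Proposition~\ref{sec:lower-bound-2} is just to supply the arithmetic lower bound which, together with the analytic upper bound, will force $P_\Delta(\gamma_1^s) = 0$ for all relevant $\Delta$ and $s$, ultimately feeding into the multiplicity estimate of Theorem~\ref{sec:differentiation-2}.
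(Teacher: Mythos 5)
Your proposal matches the paper's proof essentially line for line: the paper also assumes $P_\Delta(\gamma_1^s)\neq 0$, applies Liouville's inequality (citing Hindry--Silverman, Lemma D.3.3, which is exactly the product-formula argument you sketch, with the constant $[K:\bQ]/[K_v:\bQ_p]$ absorbed into $c$), and bounds $h(f_\Delta(\gamma_1^s))$ using Lemma~\ref{sec:auxiliary-polynomial}(b), Proposition~\ref{sec:auxiliary-polynomial-1}(b), and Lemma~\ref{sec:preliminaries} in the same way. The only cosmetic difference is that you derive the Liouville bound directly from the product formula rather than quoting the reference.
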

\begin{proof}[Proof of Theorem~\ref{sec:reduc-semist-pair}]
  Choose $l = S^\frac{3}{2}$, $T = S^{5n+2}$, $D =
  \floor{(S^{5nd+2d+2})^{1/n}}$. Then we can pick $S$ large enough so that
  the inequality \eqref{eq:2} is satisfied. Choose a polynomial $P$ with the
  properties given in Proposition~\ref{sec:auxiliary-polynomial-1}. Let
  $\Delta\in \cD(T/2)$, $0\leq s\leq lS$. Then, according to
  Proposition~\ref{sec:upper-bound-6}, for $S$ large enough we have
  \begin{equation}\label{eq:3}
      \log\pabs{f_\Delta(\gamma_1^s)} \leq -cST \leq -cS^{5n+3}
  \end{equation}
  On the other hand Proposition~\ref{sec:lower-bound-2} implies that either
  $P_\Delta(\gamma_1^s)=0$ or that
  \begin{equation*}
    \begin{split}
      -\log\pabs{f_\Delta(\gamma_1^s)} &\leq c(lS)^2D + c(D+T)\log(D+T) \\
      &\leq cS^{5 + 5d +\frac{2d+2}{n}} + c(S^{5d+\frac{2d+2}{n}} +
      S^{5n+2})\log S
    \end{split}
  \end{equation*}
  At this point we use the assumption that the linear space $V$ is a proper
  subspace of $\tG$. This means that $d\leq n-1$, hence we get the estimate
  \begin{equation}\label{eq:4}
    \begin{split}
      -\log\pabs{f_\Delta(\gamma_1^s)} &\leq cS^{5n+2}
      +c(S^{5n-2}+S^{5n+2})\log S\\
      &\leq cS^{5n+2.5}
    \end{split}
  \end{equation}
  However, if $S$ is large enough, the estimates \eqref{eq:3} and \eqref{eq:4}
  cannot simultaneously be satisfied, hence $P_\Delta(\gamma_1^s)=0$ for all
  non-negative $s\leq lS$ and all $\Delta\in\cD(T/2)$.

  Finally we apply Theorem~\ref{sec:differentiation-2} where we set $S_0:=lS$,
  $T_0:=T/2$, $D_0:=D$ and $\gamma_0:=\gamma_1$. Since $(lS)T^d \sim
  D^nS^\frac{1}{2}$, if we apply the theorem for large enough $S$ we get that
  $\gamma_1$ is a torsion point. This, however, contradicts the assumption
  that $\gamma$ is a non-torsion point, which completes the proof of the
  theorem.
\end{proof}
\section{Proofs of the main propositions}
\subsection{The auxiliary polynomial}
We are only going to give a sketch of the proof of Proposition~\ref{sec:auxiliary-polynomial-1}. It is very similar to the
proof of Lemma 4.1 in \cite{wustholz-89.2}, the only difference being that the
heights of the points in $\Gamma'(S)$ are bounded above by $c(lS)^2$ instead
of $cS^2$.  Condition (1) of the proposition is implied by having
$P_\Delta(\gamma'^s)=0$ for all $\Delta\in\cD(T)$ and all $s$, $0\leq s\leq
S$. Each of those equations is a linear equation for the coefficients of $P$ and
there are roughly $ST^d$ such equations. The space of polynomials of degree
$D$ modulo polynomials vanishing on $G$ has dimension roughly $D^n$. Therefore
if $D^n \geq cST^d$ there exists a polynomial satisfying Condition
(1). Siegel's Lemma tells us that we can pick the polynomial to have integer
coefficients and a certain upper bound of the height, determined by the height
of the system of linear equations. A not very difficult (but long) computation
shows that the height of the coefficients of each equation is bounded above
by $c(D+T)\log(D+T) + h(\gamma'^s)$. Using Lemma~\ref{sec:preliminaries} we
see that the second term is bounded above by $c(lS)^2$. Applying Siegel's Lemma
one shows the existence of a polynomial $P$ with small height which satisfies
Condition (1). It is then not difficult to show that condition (2) is also satisfied.

\subsection{The upper bound}
Let $f$ be a $v$-adic analytic function of one variable. We will use the
notation
\[
\norm{f}_r:=  \max_{\pabs{z}\leq r}{\pabs{f(z)}}.
\]

We shall call a power series $f(z)=\sum_{n=1}^{\infty}a_nz^n$ \term{normal},
if
\begin{enumerate}
  \item $\lim_{n\rightarrow\infty}\pabs{a_n} = 0$, and
  \item $\pabs{a_n} \leq 1$.
\end{enumerate}
Those condtions are equivalent to
\begin{enumerate}
\item $f(z)$ is defined and analytic on $\pabs{z}\leq
  1$.
\item $\norm{f}_1 \leq 1$.
\end{enumerate}

The following theorem is due to Mahler \cite{adams-66}*{Appendix, Theorem 14}:
\begin{thm}\label{sec:upper-bound-5}
  Let $f(z)$ be a normal function which has zeros at \\ \mbox{$x_1,\dots,x_n\in K_v$} of
  multiplicities $d_1,\dots,d_n$ respectively. Assume $\pabs{x_i} \leq r$, where
  $0<r<1$, and let $d = d_1+\dots+d_n$. Then given any
  $x\in K_v$ such that $\pabs{x}\leq r$ and any $k=0,1,2,\dots$ we
  have
  \[
  \pabs{f^{(k)}(x)} \leq r^{d-k}
  \]
\end{thm}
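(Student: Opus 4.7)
The plan is to isolate the prescribed zeros of $f$ in a polynomial factor, and then to combine the non-archimedean maximum principle with a Taylor expansion centred at $x$. Define
\[
P(z) := \prod_{i=1}^n (z - x_i)^{d_i}, \qquad h(z) := f(z)/P(z).
\]
Because $f$ vanishes to order at least $d_i$ at each $x_i$, the quotient $h$ extends to an analytic function on the closed disc $\pabs{z}\leq 1$.

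The first key step is to show that $h$ is itself normal. On the boundary $\pabs{z}=1$ the strict inequality $\pabs{x_i}\leq r<1$ and the ultrametric law give $\pabs{z-x_i}=\max(\pabs{z},\pabs{x_i})=1$, hence $\pabs{P(z)}=1$ and therefore $\pabs{h(z)}=\pabs{f(z)}\leq 1$ on the Shilov boundary. The non-archimedean maximum modulus principle then propagates this bound to the whole closed disc, yielding $\norm{h}_1\leq 1$. This step is the main obstacle: one needs that division by a polynomial whose roots all lie in the open disc preserves analyticity without enlarging the supremum norm. An alternative to invoking the maximum principle is to factor out the zeros one at a time via Weierstrass preparation in the Tate algebra $K_v\langle z\rangle$, which gives the same bound on $\norm{h}_1$ directly.

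Once $h$ is known to be normal, I would Taylor-expand both factors about $x$ and compare coefficients. Writing $u=z-x$, the estimate $\pabs{x-x_i}\leq\max(\pabs{x},\pabs{x_i})\leq r$ combined with the binomial formula yields $P(x+u)=\sum_{j=0}^{d} a_j u^j$ with $\pabs{a_j}\leq r^{d-j}$. For $h$, if $h(z)=\sum_n \beta_n z^n$ with $\pabs{\beta_n}\leq 1$, a binomial rearrangement using $\pabs{x}\leq 1$ produces $h(x+u)=\sum_m c_m u^m$ with $c_m=\sum_{n\geq m}\beta_n\binom{n}{m}x^{n-m}$ and hence $\pabs{c_m}\leq 1$. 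The coefficient of $u^k$ in $f(x+u)=P(x+u)h(x+u)$ is $b_k=\sum_{j=0}^{k} a_j c_{k-j}$, so the ultrametric inequality gives $\pabs{b_k}\leq r^{\max(d-k,0)}$. Since $f^{(k)}(x)=k!\,b_k$ and $\pabs{k!}\leq 1$, we conclude $\pabs{f^{(k)}(x)}\leq r^{d-k}$; the case $k>d$ is automatic because then $r^{d-k}>1$.
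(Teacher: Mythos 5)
The paper does not prove this statement: it is quoted as a theorem of Mahler and cited to Adams, \emph{Transcendental numbers in the $p$-adic domain} (Appendix, Theorem~14), so there is no internal proof to compare against. Your argument is correct and is essentially the standard one: factor off the prescribed zeros, control the cofactor's Gauss norm, and read off the Taylor coefficients at $x$. Each step checks out. Writing $f = Ph$ with $P = \prod_i (z-x_i)^{d_i}$, Weierstrass division in the Tate algebra shows $h$ is again analytic on the closed disc, and since $P$ is monic with all other coefficients of absolute value $\le r < 1$ we have $\lVert P\rVert_{\mathrm{Gauss}}=1$, so multiplicativity of the Gauss norm gives $\lVert h\rVert_{\mathrm{Gauss}} = \lVert f\rVert_{\mathrm{Gauss}} \le 1$ directly; this is a cleaner route than the boundary-sup argument you sketch (the Shilov boundary of the disc is the Gauss point, not the classical circle $\lvert z\rvert=1$, although your classical-point argument does work over $\bCp$ because its residue field is infinite). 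The Taylor-coefficient estimates are right: after recentering, $\lvert a_j\rvert \le r^{d-j}$ because each $\lvert x - x_i\rvert \le r$, the coefficients $c_m$ of $h(x+u)$ satisfy $\lvert c_m\rvert\le 1$ since translation by $x$ with $\lvert x\rvert\le 1$ preserves the Gauss norm, and the ultrametric bound on the Cauchy product $b_k=\sum_j a_j c_{k-j}$ gives $\lvert b_k\rvert\le r^{\max(d-k,0)}\le r^{d-k}$. Combined with $\lvert k!\rvert_v\le 1$ this yields the claimed inequality. In short, a correct and complete proof along the expected lines.
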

\begin{proof}[Proof of Proposition~\ref{sec:upper-bound-6}]
  Let $w:= \log\gamma$, and let $\phi(z):= f_\Delta(\exp(zw))$. (Here $\exp$
  is the inverse of restriction of the the logarithm function to $W_0$.) Since
  $\gamma\in W_0$ we have that $\phi$ is defined and analytic for all
  $\pabs{z}\leq 1$. For any $\alpha\in W_0$, since
  $\left\lvert\frac{X_i}{X_N}(\alpha)\right\rvert_v\leq 1$ for all $i$,
  Lemma~\ref{sec:auxiliary-polynomial}(a) implies that
  $\pabs{f_\Delta(\alpha)}\leq 1$. We conclude that $\norm{\phi}_1\leq 1$,
  hence $\phi$ is normal.
  
  By Proposition~\ref{sec:auxiliary-polynomial-1} (1), it follows that the
  order of $\phi$ at points \\ $0, pl, 2pl, \dots, plS$ is at least $T/2$. Let
  $r=\pabs{p}$. Then, since $0<r<1$ and $\pabs{spl}\leq r$, applying
  Theorem~\ref{sec:upper-bound-5} we conclude that for any integer $s$ we
  have
  \[
  \pabs{f_\Delta(\gamma_1^s)} = \pabs{\phi(ps)} \leq r^{ST/2}.
  \]
  Taking logarithms we obtain the desired result.
\end{proof}

\subsection{The lower bound}
\begin{proof}[Proof of Proposition~\ref{sec:lower-bound-2}]
  We are going to prove this proposition by means of Liouville's inequality
  \cite{hindry-silverman-00}*{Lemma~D.3.3}. Assume that
  $P_\Delta(\gamma_1^s)\neq 0$. Using
  Propositon~\ref{sec:auxiliary-polynomial-1},
  Lemma~\ref{sec:auxiliary-polynomial}(b) and Lemma~\ref{sec:preliminaries} we
  estimate
 \begin{equation*}
   \begin{split}
     h(f_\Delta(\gamma_1^s)) &\leq bD h(\gamma_1^s) + h(P_\Delta) + \log\binom{N+bD}{N}\\
     &\leq cs^2D + c(D+T)\log(D+T) + cD(lS)^2 + c\log D\\
     &\leq c(lS)^2D + c(D+T)\log(D+T)
   \end{split}
 \end{equation*}
 Since we have assumed that $P_\Delta(\gamma_1^s)\neq 0$, Liouville's inequality
 implies that 
 \[
 [K_v:\bQ_p]\log\pabs{f_\Delta(\gamma_1^s)} \geq -[K:\bQ]h(f_\Delta(\gamma_1^s))
 \]
 Combining this inequality with the previous estimate proves the assertion.
\end{proof}
\begin{bibdiv}
\begin{biblist}
\bib{adams-66}{article}{
      author={Adams, W.W.},
       title={{Transcendental numbers in the p-adic domain.}},
        date={1966},
     journal={Am. J. Math.},
      volume={88},
       pages={279\ndash 308},
}

\bib{baker-wustholz-07}{book}{
      author={Baker, Alan},
      author={Wuestholz, Gisbert},
       title={{Logarithmic forms and Diophantine geometry.}},
      series={New Mathematical Monographs},
   publisher={Cambridge University Press},
        date={2007},
      volume={9},
}

\bib{bourbaki-lie-I}{book}{
      author={Bourbaki, Nicolas},
       title={{Elements of mathematics. Lie groups and Lie algebras. Part I:
  Chapters 1- 3. English translation.}},
   publisher={{Actualites scientifiques et industrielles, Hermann. Adiwes
  International Series in Mathematics. Paris: Hermann, Publishers in Arts and
  Science; Reading, Mass.: Addison-Wesley Publishing Company. XVII, 450 p. }},
        date={1975},
}

\bib{stoll-bruin-09}{article}{
      author={Bruin, Nils},
      author={Stoll, Michael},
       title={{The Mordell-Weil sieve: proving non-existence of rational points
  on curves}},
        date={2010},
     journal={LMS Journal of Computation and Mathematics},
      volume={13},
      number={-1},
       pages={272\ndash 306},
  url={http://journals.cambridge.org/action/displayAbstract?fromPage=online&ai%
d=7881094&fulltextType=RA&fileId=S1461157009000187},
}

\bib{hindry-silverman-00}{book}{
      author={Hindry, Marc},
      author={Silverman, Joseph~H.},
       title={{Diophantine geometry. An introduction.}},
      series={Graduate Texts in Mathematics},
   publisher={Springer},
        date={2000},
      volume={201},
}

\bib{nesterenko-philippon-01}{book}{
      editor={Nesterenko, Yuri~V.},
      editor={Philippon, Patrice},
       title={Introduction to algebraic independence theory},
      series={Lecture Notes in Mathematics},
   publisher={Springer-Verlag},
     address={Berlin},
        date={2001},
      volume={1752},
        ISBN={3-540-41496-7},
         url={http://dx.doi.org/10.1007/b76882},
      review={\MR{MR1837822 (2002g:11104)}},
}

\bib{philippon-86}{article}{
      author={Philippon, Patrice},
       title={Lemmes de z\'eros dans les groupes alg\'ebriques commutatifs},
    language={French},
        date={1986},
        ISSN={0037-9484},
     journal={Bull. Soc. Math. France},
      volume={114},
      number={3},
       pages={355\ndash 383},
         url={http://www.numdam.org/item?id=BSMF_1986__114__355_0},
      review={\MR{MR878242 (89c:11111)}},
}

\bib{serre-79}{article}{
      author={Serre, J.-P.},
       title={Quelque properi\'et\'es des groupes alg\'ebriques commutatifs},
    language={French},
        date={1979},
     journal={Ast\'erisque},
      volume={69-70},
}

\bib{stoll-06}{article}{
      author={Stoll, Michael},
       title={Independence of rational points on twists of a given curve},
        date={2006},
        ISSN={0010-437X},
     journal={Compos. Math.},
      volume={142},
      number={5},
       pages={1201\ndash 1214},
         url={http://dx.doi.org/10.1112/S0010437X06002168},
      review={\MR{MR2264661 (2007m:14025)}},
}

\bib{stoll-07}{article}{
      author={Stoll, Michael},
       title={Finite descent obstructions and rational points on curves},
        date={2007},
        ISSN={1937-0652},
     journal={Algebra Number Theory},
      volume={1},
      number={4},
       pages={349\ndash 391},
         url={http://dx.doi.org/10.2140/ant.2007.1.349},
      review={\MR{MR2368954 (2008i:11086)}},
}

\bib{wustholz-89.2}{article}{
      author={Wuestholz, Gisbert},
       title={{Algebraische Punkte auf analytischen Untergruppen algebraischer
  Gruppen. (Algebraic points on analytic subgroups of algebraic groups).}},
    language={German},
        date={1989-05},
     journal={Ann. of Math. (2)},
      volume={129},
      number={3},
       pages={501\ndash 517},
}
\end{biblist}
\end{bibdiv}
\end{document}